\newcommand{\qed}{\hfill \ensuremath{\Box}}
\newenvironment{proof}{\vspace{1ex}\noindent{\it Proof.}\hspace{0.5em}}
	{\hfill\qed\vspace{1ex}}
\newtheorem{theorem}{Theorem}[section]
\newtheorem{lemma}[theorem]{Lemma}
\newtheorem{proposition}[theorem]{Proposition}
\newtheorem{corollary}[theorem]{Corollary}
\newtheorem{definition}[theorem]{Definition}
\DeclareMathOperator{\Gal}{\operatorname{Gal}}
\DeclareMathOperator{\Q}{\mathbf{Q}}
\DeclareMathOperator{\Z}{\mathbf{Z}}
\DeclareMathOperator{\A}{\mathbf{A}}
\DeclareMathOperator{\N}{\mathbf{N}}
\DeclareMathOperator{\Spec}{\operatorname{Spec}}
\DeclareMathOperator{\Hom}{\operatorname{Hom}}
\DeclareMathOperator{\Pic}{\mathrm{Pic}}
\DeclareMathOperator{\Gr}{\mathrm{Gr}}
\DeclareMathOperator{\et}{\acute{\mathrm{e}}{\mathrm{t}}}
\DeclareMathOperator{\Gm}{\mathbf{G}_m}
\DeclareMathOperator{\Res}{\mathrm{Res}}
\title{Finite descent obstruction and non-abelian reciprocity}
\author{Otto Overkamp \footnote {Imperial College London, South Kensington Campus, London SW7 2AZ; \newline otto.overkamp13@imperial.ac.uk}}
\date{}
\begin{document}
\maketitle
{\abstract{For a nice algebraic variety $X$ over a number field $F$, one of the central problems of Diophantine Geometry is to locate precisely the set $X(F)$ inside $X(\A_F)$, where $\A_F$ denotes the ring of adèles of $F$. One approach to this problem is provided by the finite descent obstruction, which is defined to be the set of adelic points which can be lifted to twists of torsors for finite étale group schemes over $F$ on $X$. More recently, Kim proposed an iterative construction of another subset of $X(\A_F)$ which contains the set of rational points. In this paper, we compare the two constructions. Our main result shows that the two approaches are equivalent.}}\\
\\
$\mathbf{Keywords}$: Diophantine Geometry, Finite descent obstruction, Adelic points, Rational points\\
$\mathbf{Mathematics \; Subject \; Classification}$: 11G35\\
\tableofcontents
\section*{Introduction}
Let $F$ be a finite extension of the field $\Q$ of rational numbers. Let $X$ be a geometrically integral smooth quasi-projective algebraic variety over $F$ with a rational point $b\in X(F)$, which we will keep fixed from now on. Let $\overline{F}$ denote an algebraic closure of $F$ and put $\overline{X}:=X\times_F\Spec\overline{F}$. Let us first describe the main ideas of this paper informally. Denote by $\A_F$ the ring of ad$\grave{\mathrm{e}}$les of $F$. One central question of Diophantine geometry is to locate precisely the set $X(F)$ inside $X(\A_F).$ There are several \it local-to-global principles \rm that attempt to rectify the situation, for example the (finite) descent obstruction, introduced by Harari and Skorobogatov \cite{HSk}. This is a set $X(\A_F)^{\text{f-cov}}$
such that 
$$X(F)\subseteq X(\A_F)^{\text{f-cov}}\subseteq X(\A_F).$$
It is defined using finite $\et$ale covers of $X$; a precise definition will be recalled later. 
On the other hand, we can construct similar sets lying between $X(F)$ and $X(\A_F)$ using ideas from homotopy theory and the geometric $\et$ale fundamental group $\pi_1(\overline{X}, b)$ of $X$. This group can be defined as the automorphism group of the functor $\mathrm{Fib}_b,$ which associates to a Galois covering of $\overline{X}$ its fibre over $b$. In addition, if $x_v\in X(F_v)$ for any place $v$ of $F$, we have \it path torsors \rm $\pi_1(\overline{X};b, x_v)$, elements of which are isomorphisms from $\mathrm{Fib}_b$ to $\mathrm{Fib}_{x_v}$. This is clearly a right torsor for $\pi_1(\overline{X},b)$. Furthermore, it carries a natural action of $\Gamma_v$ which is compatible (in a suitable sense) with the action of $\Gamma_v$ on $\pi_1(\overline{X},b)$ that is inherited from the action of $\Gamma_F$ on $\pi_1(\overline{X},b).$ The path torsors $\pi_1(\overline{X};b,x_v)$ thus define elements of the non-Abelian continuous cohomology set $H^1(\Gamma_v, \pi_1(\overline{X},b)).$ 
In this way, we can construct a map
$$j\colon X(\A_F)\to \prod_v H^1(\Gamma_v, \pi_1(\overline{X},b)),$$ which is usually called the \it period map. \rm On the other hand, we also have a natural map
$$H^1(\Gamma_F, \pi_1(\overline{X},b))\to \prod_v H^1(\Gamma_v, \pi_1(\overline{X},b)).$$ One sees easily that any adelic point $P$ of $X$ which comes from a rational point has the property that $j(P)$ lies in the image of this second map. By considering all adelic points of $X$ which share this property, we can define another set which lies between $X(F)$ and $X(\A_F)$. \\
Recently, Kim \cite{K} proposed an iterative construction of a subset $X(\A_F)_\infty$ of the set $X(\A_F)$ which contains the diagonal image of the set of rational points. His construction assumes that $X$ has a rational point and some other conditions (called [Coh1] and [Coh2] in \cite{K}) which we shall state explicitly later. In this paper, we interpret this construction in terms of the descent obstruction introduced by Harari and Skorobogatov \cite{HS}. More precisely, we have the following
\begin{theorem} (Theorem \ref{mainresult})
Let $X$ be a geometrically integral smooth quasi-projective variety over $F$ with a rational point $b$. Assume that $X$ satisfies the conditions [Coh 1] and [Coh 2] from \cite{K}, p. 315 (see also the Appendix of this paper). Let $X(\A_F)^{\mathrm{Nil}}$ be the subset of the set of adelic points of $X$ which survive all torsors $Y\to X$ for finite étale nilpotent group schemes of odd order over $F$ (see Definition \ref{survivesdefinition}). Then
$$X(\A_F)^{\mathrm{Nil}}=X(\A_F)_\infty.$$
\end{theorem}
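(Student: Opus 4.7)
The plan is to establish both inclusions by comparing the two constructions stage by stage along the lower central series of the geometric étale fundamental group $\pi := \pi_1(\overline{X},b)$. Let $\pi = \pi^{(1)} \supseteq \pi^{(2)} \supseteq \cdots$ denote this filtration; the quotients $\pi/\pi^{(n+1)}$ are pronilpotent of class $\leq n$, and finite odd-order étale nilpotent torsors over $X$ should be classified, up to twist, by finite odd-order nilpotent $\Gamma_F$-equivariant quotients of $\pi$ together with an appropriate cocycle.

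First, for each $n\ge 1$ I would define an intermediate set $X(\A_F)^{\mathrm{Nil}}_n$ consisting of adelic points that survive all torsors whose structure group is a finite odd-order quotient of $\pi/\pi^{(n+1)}$. The equality $X(\A_F)^{\mathrm{Nil}} = \bigcap_n X(\A_F)^{\mathrm{Nil}}_n$ reduces to a compactness argument on an inverse limit of finite sets, which should follow from the conditions [Coh 1] and [Coh 2] recalled in the Appendix. The main task is then to prove by induction on $n$ that $X(\A_F)^{\mathrm{Nil}}_n = X(\A_F)_n$, where $X(\A_F)_n$ denotes Kim's $n$-th iterate.

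The inductive step would exploit the central extension
\[ 1 \to \pi^{(n)}/\pi^{(n+1)} \to \pi/\pi^{(n+1)} \to \pi/\pi^{(n)} \to 1, \]
whose kernel is abelian. After passing to finite odd-order quotients and applying the long exact sequence of non-abelian cohomology, the condition that an adelic point $P \in X(\A_F)^{\mathrm{Nil}}_{n-1}$ also lie in $X(\A_F)^{\mathrm{Nil}}_n$ unwinds into a family of abelian $H^1$- and $H^2$-conditions coming from the finite abelian quotients of $\pi^{(n)}/\pi^{(n+1)}$. On Kim's side, the obstruction to advancing from stage $n-1$ to stage $n$ is controlled by precisely the same central extension, and the equality of the two conditions should follow once one verifies compatibility of the period map with the twisting constructions that produce the relevant torsors.

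The main obstacle I anticipate is the passage between finite quotients and the full pronilpotent completion: Kim's construction operates with pro-finite nilpotent fundamental groups, whereas the descent obstruction only sees honest finite étale torsors. The odd-order restriction should be essential here, as it allows one to invert $2$ uniformly and so eliminates sign-type obstructions in the central extensions, ensuring that the natural map from the pronilpotent completion of $\pi$ to the inverse limit over finite odd-order nilpotent quotients becomes an isomorphism on the cohomology sets relevant to both constructions. Verifying this compatibility carefully, and confirming that the inverse limit commutes with the local conditions imposed at each place $v$ (so that the set $X(\A_F)_\infty$ is really the limit of $X(\A_F)_n$ in the sense needed for the comparison), is where I expect the bulk of the technical work to lie.
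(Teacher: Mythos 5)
Your high-level plan (compare along the lower central series, use cofinal families of finite nilpotent torsors) is in the right spirit, and you correctly flag the passage from finite quotients to the pronilpotent completion as the main technical issue. However, there are two genuine gaps, and the proposal papers over exactly the places where the hard work happens.

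First, you never explain how to convert Kim's defining conditions (vanishing of the compositions $prec_n\circ j_n$ and $prec_{n-1}^{n}\circ j_{n-1}$, which are dualized obstruction classes built from Poitou--Tate duality and connecting maps) into something directly comparable with the descent condition. The statement ``on Kim's side, the obstruction to advancing from stage $n-1$ to stage $n$ is controlled by precisely the same central extension, and the equality of the two conditions should follow once one verifies compatibility of the period map with the twisting constructions'' is asserting the theorem rather than proving it. The paper's key structural step here (Proposition \ref{cartesianproposition}) is to recharacterize $X(\A_F)^{n+1}_n$ as the fibre product of $X(\A_F)$ and $\varprojlim_M\varinjlim_S H^1(\Gamma_F^S,\Delta_n^M)$ over $\varprojlim_M\prod_v^{'}H^1(\Gamma_v,\Delta_n^M)$, i.e.\ as the set of adelic points whose period image comes from a global class. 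Without some analogue of this recharacterization, there is no bridge between Kim's dual/obstruction formulation and the ``lifting to twists'' formulation, and your long-exact-sequence unwinding has nowhere to land.

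Second, even granting such a recharacterization, the substantive direction $X(\A_F)^{\mathrm{Nil}_n}\subseteq X(\A_F)^{n+1}_n$ requires producing a single global class in $H^1(\Gamma_F,\Delta_n^M)$ hitting the period image, from the data that each finite-level period image comes from some (a priori unrelated) global class. You do mention ``a compactness argument on an inverse limit of finite sets,'' but you attach it to the wrong step: you invoke it to justify $X(\A_F)^{\mathrm{Nil}}=\bigcap_n X(\A_F)^{\mathrm{Nil}_n}$, which is simply the definition and needs no argument. The place where finiteness is actually indispensable is the Minkowski--Hermite statement (Lemma \ref{finitefibreslemma}): the localization map $H^1(\Gamma_F,G)\to\prod_v H^1(\Gamma_v,G)$ has finite fibres for finite discrete $G$, so the fibres over the cofinal system form a nonempty inverse system of finite sets and their inverse limit is nonempty, yielding the desired class in $H^1(\Gamma_F,\Delta_n^M)$ (Proposition \ref{localglobalproposition1}). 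There is also a subsequent step, not mentioned in your plan, showing this class factors through $\Gamma_F^S$ for an admissible $S$ (Lemma \ref{localgloballemma}). Finally, your explanation of the odd-order restriction (``inverting $2$ uniformly'' to kill ``sign-type obstructions'') is not the actual reason: the odd-order hypothesis is imposed so that cohomology at the infinite places vanishes, which is what makes the restricted product and Poitou--Tate machinery behave; [Coh1]/[Coh2] are needed for Kim's construction and the injectivity of the localization maps, not for defining $X(\A_F)^{\mathrm{Nil}}$.
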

In fact, Theorem \ref{mainresult} gives a more precise description of the relation between the descent obstruction and Kim's construction; this will become clearer in the following chapters.\\
\\
$\mathbf{Acknowledgement}.$ The author takes pleasure in expressing his deep gratitude to his supervisor, Professor A. Skorobogatov, for suggesting this problem and for his continued support and guidance. He would also like to thank Professor M. Kim for very helpful suggestions, without which the Appendix would not have been written, as well as to Dr. N. Dogra for several enlightening discussions. This work was supported by the Engineering and Physical Sciences Research Council [EP/L015234/1], and the EPSRC Centre for Doctoral Training in Geometry and Number Theory (London School of Geometry and Number Theory), University College London.
\section{Kim's construction}
Recently, Kim \cite{K} defined a filtration
$$X(\A_F)=X(\A_F)_1\supseteq X(\A_F)^2_1\supseteq X(\A_F)_2\supseteq...\supseteq X(\A_F)_n\supseteq X(\A_F)^{n+1}_n\supseteq...$$ If one sets
$$X(\A_F)_{\infty}:=\bigcap_{n=1}^\infty X(\A_F)_n,$$ 
one finds that $$X(F)\subseteq X(\A_F)_\infty.$$
If $X=\Gm,$ the filtration is only non-trivial at the first level because the geometric fundamental group is Abelian, and can be constructed by means of global class field theory.\\
In this subsection, let us review this construction briefly; this will be needed later in the proof. For more details, the reader should consult \cite{K}. \\
More precisely, the sets $X(\A_F)^n_{n-1}$ and $X(\A_F)_n$ are constructed as follows: Let $\pi_1(\overline{X},b)$ be the $\et$ale fundamental group of $\overline{X}$ with base point $b$ and let $\Delta^{[1]}$ be its maximal prime-to-2 quotient. (One has to work with profinite groups which do not have any quotients of even order to deal with certain technical issues at the infinite places of $F$; see \cite{K}, Proposition 2.1 and the comment thereafter.) Define inductively
$$\Delta^{[n+1]}:=\overline{[\Delta^{[1]}, \Delta^{[n]}]}$$ and
$$\Delta_n:=\Delta^{[1]}/\Delta^{[n+1]}.$$ Also define $T_n:=\Delta^{[n]}/\Delta^{[n+1]}$, which leads to a central extension
$$0\to T_n\to \Delta_n\to \Delta_{n-1}\to 0$$ for $n\geq 2.$ The groups $\Delta^{[n]}$, $\Delta_n$ and $T_n$ are all profinite, and $T_n$ is Abelian for all $n\geq 1$.

If $M$ denotes a finite set of odd primes, we denote by $\Delta^M_n$ the maximal pro-$M$ quotient of $\Delta_n$. One can see easily that this is the same as the quotient of $(\Delta^{[1]})^M$ modulo the $n$th element of its lower central series. We denote by $T_n^M$ the graded pieces of the lower central series of $(\Delta^{[1]})^M$.

The groups $\Delta_n$ and $\Delta_n^M$ have the following property: Suppose $G$ is a finite discrete group of nilpotency class $\leq n$. Then any continuous homomorphism $\Delta^{[1]}\to G$ factors through $\Delta_n$. If, moreover, $G$ has order divisible only by primes in $M$, then the morphism $\Delta^{[1]}\to G$ factors through $\Delta_n^M$. 

Let $\Gamma_v$ be a decomposition group at $v$, which we identify with the absolute Galois group of $F_v$. From now on, we shall always assume that the conditions [Coh 1] (that $T^M_n$ is torsion-free for all $M$) and [Coh 2] (that $H^0(\Gamma_v, T^M_n)=0$ for all non-archimedean places $v$ of $F$) are satisfied. These conditions are needed for technical reasons in Kim's construction to prove injectivity of some localization map and for related purposes (see \cite{K}, p. 318f), which is the only reason why we include them here. It is claimed without proof in \cite{K} that these conditions are satisfied if $X$ is a smooth, projective, and geometrically integral curve over $F$. We shall give a proof of this fact in the Appendix.\\
Suppose $S$ is a finite set of places of $F$. For fixed $M$, we call $S$ \it admissible \rm if the action of $\Gamma_F$ on $\Delta^M_n$ factors through $\Gamma_F^S:=\Gal(F_S/F)$, where $F_S$ is the largest extension of $F$ unramified outside $S$. This condition is also required for technical reasons in this construction, in order to apply Poitou-Tate duality. One can show that such an $S$ always exists, but the precise construction of such sets $S$ for given $M$ is rather involved; see \cite{K}, p. 316. In any case, we will not use any facts about these sets other than their existence.\\
Recall that, for each $n$ and $M$, we have a \it period map \rm
$$j_n^M\colon X(\A_F)\to \prod_v\nolimits^{'} H^1(\Gamma_v, \Delta^M_n)$$ given by
$$(x_v)_v\mapsto([\pi_1(\overline{X};b, x_v)\overset{\pi_1(\overline{X},b)}{\times} \Delta^M_n])_v$$ (see \cite{K}, Chapter 3).
Here, the restricted product is taken with respect to the subsets $H^1(\Gamma_v/I_v, (\Delta_n^M)^{I_v})$ of $H^1 (\Gamma_v, \Delta^M_n),$ where $I_v$ is the inertia group at $v$. This makes sense because the element $\pi_1(\overline{X}; b,x_v)$ actually lies in $H^1(\Gamma_v/I_v, (\Delta_n^M)^{I_v})$ for all but finitely many $v$ (\cite{K}, p. 325). It does not matter whether we take the restricted product over all places of $F$ or only the non-Archimedean ones, since the cohomology groups at the infinite places vanish due to the fact that all finite quotients of the groups $\Delta_n^M$ have order prime to 2.  Also recall the localization map
$$H^1(\Gamma_F^S, \Delta_n^M)\to \prod\limits_v \nolimits^{'} H^1(\Gamma_v, \Delta^M_n),$$ which is injective by \cite{K}, p. 319.

We define $X(\A_F)_1:=X(\A_F).$ First, one constructs a map
$$prec_1\colon \varprojlim_M \prod\limits_v \nolimits^{'} H^1(\Gamma_v, \Delta^M_1)\to H^1(\Gamma_F, D(\Delta_1))^{\vee}$$ and defines
$$X(\A_F)^2_1:=(prec_1\circ j_1)^{-1}(0),$$
where $j_n:=\varprojlim j_n^M.$ Here, $D(-)$ refers to the continuous Galois dual $\Hom_{\mathrm{cont}}(-, \mu_{\infty})$ and $-^\vee$ refers to the Pontriagin dual $\Hom_{\mathrm{cont}}(-, \Q/\Z).$ The kernel of the map $prec_1$ is precisely $\varprojlim_M\varinjlim_S H^1(\Gamma_F^S, \Delta^M_1)$ by Poitou-Tate duality (see \cite{K}, Proposition 2.1 and \cite{N}, Theorem 8.6.7). Now, we have a connecting map
$$prec^2_1:=\delta^g_1\colon \varprojlim_M\varinjlim_S H^1(\Gamma_F^S, \Delta^M_1)\to \varprojlim_M\varinjlim_S H^2(\Gamma_F^S, T^M_2)$$ coming from the central extension above. The set $X(\A_F)_2$ is now defined to be the set of all elements of $X(\A_F)^2_1$ on which the composition $prec^2_1\circ j_1\colon X(\A_F)^2_1\to\varprojlim_M\varinjlim_S H^2(\Gamma_F^S, T^M_2)$ vanishes. Now consider the projection map
$$p_1\colon \prod_v^{'} H^1(\Gamma_v, \Delta_2^M)\to \prod_v^{'} H^1(\Gamma_v, \Delta^M_1).$$ The kernel of the connecting map
$$\varinjlim_S H^1(\Gamma_F^S, \Delta^M_1)\to \varinjlim_S H^2(\Gamma_F^S, T^M_2)$$
can be viewed as a subset of the target of $p_1$, and we define $W(\Delta_2^M)$ to be the inverse image of this kernel under $p_1$.
Now one defines a map
$$prec_2\colon \varprojlim_M W(\Delta_2^M)\to H^1(\Gamma_F, D(T_2))^\vee.$$
Since the image of $X(\A_F)_2$ under the period map $j_2$ is contained in $\varprojlim_M W(\Delta_2^M)$, we can now define
$$X(\A_F)^3_2:=(prec_2\circ j_2)^{-1}(0).$$
From here, one proceeds by induction: Define
$$prec_{n-1}^{n}:=\delta_n^g\colon \varprojlim_M\varinjlim_S H^1(\Gamma_F^S, \Delta_{n-1}^M)\to \varprojlim_M\varinjlim_SH^2(\Gamma_F^S, T^M_{n}),$$ and let $X(\A_F)_n$ be the set of elements of $X(\A_F)_{n-1}^n$ on which the composition
$$X(\A_F)_{n-1}^n\to \varprojlim_M\varinjlim_S H^1(\Gamma_F^S, \Delta_{n-1}^M)\to \varprojlim_M\varinjlim_SH^2(\Gamma_F^S, T^M_{n})$$ vanishes. This makes sense by construction of $X(\A_F)^n_{n-1}.$ Now one defines $W(\Delta_n^M)$ in a way entirely analogous to the construction of $W(\Delta_2^M)$ above, constructs a map
$$prec_{n} \colon \varprojlim_M W(\Delta_{n}^M)\to H^1(\Gamma_F, D(T_{n}))^\vee,$$ and defines
$$X(\A_F)^{n+1}_n:=(prec_n\circ j_n)^{-1}(0).$$
The reader will have noticed several double limits of the form
$$\varprojlim_M\varinjlim_S H^1(\Gamma^S_F, \Delta^M_n).$$ They are to be interpreted in the following way: For fixed $M$, one considers the set of all admissible sets $S$ of places of $F$, ordered by inclusion, and calculates the limit $\varinjlim_S H^1(\Gamma_F^S, \Delta_n^M).$ These sets naturally form a projective system, indexed by $M$ (with the set of all $M$s also ordered by inclusion), and one can now make sense of the limit $\varprojlim_M(\varinjlim_S H^1(\Gamma^S_F, \Delta^M_n)).$

\section{The Galois action on the geometric $\mathbf{\acute{e}}$tale fundamental group}
Consider the homotopy exact sequence \label{galoisactionsection}
$$0\to \pi_1(\overline{X},b)\to \pi_1(X,b)\to \Gal(\overline{F}/F)\to 0.$$ Then the rational point $b\colon \Spec F\to X$ gives rise to a section $b_\ast\colon \Gal(\overline{F}/F)\to \pi_1(X,b)$, so $\Gal(\overline{F}/F)$ operates on $\pi_1(\overline{X},b)$ by conjugation.\\
\\
Let us now recall the profinite structure of $\pi_1(\overline{X},b).$ This is necessary to set up some notation which will be very important later. Let $\{Y^{\alpha}\}$ be the set of all Galois covers $Y^{\alpha}\to \overline{X}.$ Note that such a Galois cover $Y^{\alpha}$ is nothing but a connected right torsor for a finite $\et$ale group scheme $G_\alpha$ over $\overline{F}.$ We define a partial ordering on the index set by putting $\beta\leq\alpha$ if there is an $\overline{X}$-morphism $\phi_{\alpha\beta}\colon Y^\alpha\to Y^\beta$. Furthermore, we assume that a system of such maps and geometric points $p_\alpha\in Y_b^\alpha(\overline{F})$, satisfying $\phi_{\alpha\beta}(p_\alpha)=p_\beta$, has been chosen. Note that for $\beta \leq \alpha$, there is at most one map satisfying this condition. Such a system together with a choice of geometric points can always be constructed, albeit in a non-canonical way (see \cite{Sz}, Chapter 5.4). For each $\alpha$, we have a canonical isomorphism
$$\mathrm{Aut}_{\overline{X}}(Y^\alpha)^{\mathrm{op}}\cong G_\alpha(\overline{F})$$ (we have to consider the opposite group here because $G_\alpha$ acts from the right on $Y^\alpha$, whereas, by convention, an automorphism group always acts from the left). For $\beta\leq\alpha$ the maps $p_{\alpha\beta}$ induce maps
$$\mathrm{Aut}_{\overline{X}}(Y^\alpha)\to \mathrm{Aut}_{\overline{X}}(Y^\beta),$$ which, in turn, give rise to an isomorphism
$$\pi_1(\overline{X},b)\cong \varprojlim_\alpha \mathrm{Aut}_{\overline{X}}(Y^\alpha)^{\mathrm{op}}\cong \varprojlim_\alpha G_\alpha(\overline{F}).$$
(For more details about this construction, see \cite{Sz}, Chapter 5.4). This isomorphism explains why the system $Y^{\alpha}$ is referred to as a \it pro-representing system. \rm\\
Now suppose that, for some $\alpha$, $Y^\alpha$ arises from a geometrically connected torsor $Y\to X$ for some finite $\et$ale $F$-group scheme $G$. Then $G(\overline{F})$ automatically carries an action of $\Gal(\overline{F}/F),$
and we obtain an induced homomorphism of topological groups
$$\pi\colon \pi_1(\overline{X},b)\to G(\overline{F}).$$ In general, this homomorphism will not be equivariant, since the Galois action on the finite group $G(\overline{F})$ is not determined uniquely by the base change of the torsor $Y\to X$ to $\overline{X}$. As it turns out, however, a small adjustment will suffice to deal with this problem. \begin{lemma}
Let $Y$ be a geometrically connected right torsor for $G$. Let $\tau\colon \Gamma_F\to G$ be the cocycle of the class of the fibre $Y_b$ in $H^1(\Gamma_F, G)$ specified by
$${}^\sigma y=y\cdot \tau(\sigma),$$ where $y\in Y_b(\overline{F})$ is the distinguished geometric point coming from the pro-representing system specified above. Then the map \label{calculationlemma}
$$\pi\colon \pi_1(\overline{X},b)\to G^\tau$$ is $\Gamma_F$-equivariant.
\end{lemma}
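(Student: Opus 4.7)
The plan is to verify the equivariance by computing $({}^\sigma\gamma)(y)$ explicitly, where $y\in Y_b(\overline{F})$ is the distinguished geometric point. Two ingredients from the preceding set-up are essential. First, the map $\pi$ is characterised by $\gamma(y)=y\cdot\pi(\gamma)$; moreover, since every $\overline{X}$-automorphism of the right $G$-torsor $Y_{\overline{X}}$ is automatically $G$-equivariant, the action of $\gamma$ on $Y_b(\overline{F})$ commutes with right translation by $G(\overline{F})$. Second, by construction of the section $b_\ast$, the element $b_\ast(\sigma)\in\pi_1(X,b)$ acts on the fibre $Y_b(\overline{F})$ as the natural Galois action ${}^\sigma$; so conjugation by $b_\ast(\sigma)$ inside $\pi_1(X,b)$ translates into $({}^\sigma\gamma)(y)=\sigma\bigl(\gamma(\sigma^{-1}(y))\bigr)$.

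The calculation then proceeds in three steps. First, using the defining property of $\tau$ applied to $\sigma^{-1}$, one has $\sigma^{-1}(y)=y\cdot\tau(\sigma^{-1})$. Second, $G$-equivariance of $\gamma$ gives
$$\gamma\bigl(\sigma^{-1}(y)\bigr)=\gamma\bigl(y\cdot\tau(\sigma^{-1})\bigr)=y\cdot\pi(\gamma)\cdot\tau(\sigma^{-1}).$$
Third, since $Y$, $G$ and the torsor action are all defined over $F$, the Galois action ${}^\sigma$ is $G$-equivariant, and so
$$({}^\sigma\gamma)(y)=\sigma(y)\cdot{}^\sigma\pi(\gamma)\cdot{}^\sigma\tau(\sigma^{-1})=y\cdot\tau(\sigma)\cdot{}^\sigma\pi(\gamma)\cdot\tau(\sigma)^{-1},$$
where the last step uses $\sigma(y)=y\cdot\tau(\sigma)$ together with the identity $\tau(\sigma)\cdot{}^\sigma\tau(\sigma^{-1})=\tau(e)=e$ obtained from the cocycle relation.

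Comparing this with the defining relation $({}^\sigma\gamma)(y)=y\cdot\pi({}^\sigma\gamma)$ yields
$$\pi({}^\sigma\gamma)=\tau(\sigma)\cdot{}^\sigma\pi(\gamma)\cdot\tau(\sigma)^{-1},$$
which is precisely the inner-twisted Galois action on $G^\tau$. This establishes equivariance. The argument is essentially bookkeeping: the only conceptual inputs are the commuting of $\gamma$ with right $G$-translation (the torsor property) and the interpretation of $b_\ast(\sigma)$ as the Galois action on the fibre. The main hazard is keeping the left/right conventions for automorphism groups and the identification $\mathrm{Aut}_{\overline{X}}(Y^\alpha)^{\mathrm{op}}\cong G_\alpha(\overline{F})$ consistent throughout; once these are fixed, the cocycle manipulation is essentially forced.
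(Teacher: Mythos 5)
Your proof is correct and follows essentially the same calculation as the paper's, merely run in the forward direction (starting from $({}^\sigma\gamma)(y)=\sigma(\gamma(\sigma^{-1}(y)))$ and simplifying) rather than the paper's slightly cryptic backwards chain starting from $y\cdot\tau(\sigma)\,{}^\sigma\pi(\gamma)\,\tau(\sigma)^{-1}$. You make explicit the two points the paper leaves compressed — the description of the conjugation action via $b_\ast$ and the cocycle identity ${}^\sigma\tau(\sigma^{-1})=\tau(\sigma)^{-1}$ — but the underlying argument is the same.
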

\begin{proof}
The map $\pi$ is specified by the requirement that
$$\gamma(y)=y\cdot \pi(\gamma)$$ for all $\gamma\in\pi_1(\overline{X},b).$ We compute
\begin{align*}
y\cdot\tau(\sigma)\ {}^{\sigma}\pi(\gamma)\tau(\sigma)^{-1}&=\ {}^\sigma(y\cdot \pi(\gamma))\cdot\tau(\sigma)^{-1}\\
&=\ {}^\sigma\gamma(y)\cdot\tau(\sigma)^{-1}\\
&=y\cdot \pi(\ {}^\sigma\gamma),
\end{align*}
where the last equality is due to the fact that $\gamma$ is an automorphism of the fibre functor, and hence commutes with the right action of $G$. We also used the cocycle condition several times. The claim follows from this calculation.
\end{proof}

We are now in a position to describe the Galois action on finite quotients of $\pi_1(\overline{X},b),$ provided they arise as in Lemma \ref{calculationlemma}. The remainder of this section is dedicated to showing that understanding these finite quotients suffices to understand the Galois action on $\pi_1(\overline{X},b).$ \label{cofinalsystemdefinition}
\begin{definition} \rm(Compare Stoll \cite{S}, p. 366) \it Let $\mathcal{P}$ be a property of finite groups. A \rm cofinal family \it of torsors with property $\mathcal{P}$ is a family $\{Y^i\to X\}$ of geometrically connected torsors for finite $\acute{e}$tale  group schemes $G_i$ which have $\mathcal{P}$ such that for all connected torsors $Z\to \overline{X}$ for a finite group that has $\mathcal{P}$, there is a map of torsors $\overline{Y}^i\to Z$ for some $i$. 
\end{definition}
Recall that the finite $\et$ale $F$-group scheme $G$ has property $\mathcal{P}$ if the finite group $G(\overline{F})$ does. The properties $\mathcal{P}$ that will be important for us will always be such that if a finite group $G$ has $\mathcal{P}$ and $H\subseteq G$ is a subgroup, then $H$ has $\mathcal{P}$, and if $G$ and $H$ are finite groups both of which have $\mathcal{P}$, then $G\times H$ has $\mathcal{P}.$
\begin{lemma}
Suppose that $\mathcal{P}$ is such that\\
(i) whenever $K/F$ is a finite extension and $G$ a finite $\acute{e}$tale group scheme over $K$ which has $\mathcal{P}$, then $\Res_{K/F} G$ has $\mathcal{P}$, and\\
(ii) if $G$ has $\mathcal{P}$ then so does every subgroup scheme of $G$.\\ 
\label{cofinalsystemsexistencelemma}Then there is a cofinal family of torsors with property $\mathcal{P}$. In particular, there exists a cofinal family of torsors under group schemes of odd order and of nilpotency class $\leq n$, for all $n$. 
\end{lemma}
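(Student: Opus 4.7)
The plan is to show that the family of all geometrically connected torsors $Y\to X$ for finite étale $F$-group schemes with property $\mathcal{P}$ is itself cofinal. Given a connected Galois cover $Z\to\overline{X}$ for a finite group $G$ having $\mathcal{P}$, and a choice of geometric point of $Z$ above $b$, one obtains a continuous surjection $\phi\colon\pi_1(\overline{X},b)\twoheadrightarrow G$ with open kernel $N$. The task is to produce a geometrically connected torsor $Y\to X$ in the family together with a map of torsors $\overline{Y}\to Z$.

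The central idea is to pass from $N$ to its Galois saturation. The section $b_*$ gives a continuous action of $\Gamma_F$ on $\pi_1(\overline{X},b)$ by conjugation; since $N$ is open of finite index, its stabilizer $H\subseteq\Gamma_F$ under this action is open of finite index, so
$$N' := \bigcap_{\sigma\in\Gamma_F}{}^\sigma N = \bigcap_{[\sigma]\in\Gamma_F/H}{}^\sigma N$$
is a finite intersection of open normal subgroups of $\pi_1(\overline{X},b)$ and is tautologically $\Gamma_F$-stable. Put $G_*:=\pi_1(\overline{X},b)/N'$, so that $G_*$ inherits a continuous $\Gamma_F$-action making it a finite étale $F$-group scheme. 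The diagonal map embeds $G_*(\overline{F})$ into $\prod_{[\sigma]\in\Gamma_F/H}\pi_1(\overline{X},b)/{}^\sigma N$, each factor of which is abstractly isomorphic to $G$; hence $G_*$ has $\mathcal{P}$ by closure of $\mathcal{P}$ under finite products and subgroups. Combining the $\Gamma_F$-equivariant surjection $\pi_1(\overline{X},b)\twoheadrightarrow G_*$ with the splitting $b_*$ of the homotopy exact sequence produces a continuous homomorphism $\pi_1(X,b)\to G_*(\overline{F})\rtimes\Gamma_F$ above $\Gamma_F$, and hence a geometrically connected $G_*$-torsor $Y\to X$. The projection $G_*\twoheadrightarrow G$ onto the factor indexed by the identity coset then supplies the required torsor map $\overline{Y}\to Z$.

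For the final assertion, both \emph{odd order} and \emph{nilpotent of class at most $n$} clearly satisfy (ii), while Weil restriction commutes with base change to $\overline{F}$: one has $\Res_{K/F}(G)(\overline{F})\cong G(\overline{F})^{[K:F]}$ as an abstract finite group, a product that remains of odd order (respectively nilpotent of class $\leq n$) whenever $G(\overline{F})$ is. I expect the main technical obstacle to be the converse of Lemma \ref{calculationlemma}, namely that the constructed $\Gamma_F$-equivariant quotient $G_*$ genuinely corresponds to a geometrically connected $X$-torsor recovering the map $\pi_1(\overline{X},b)\to G_*$; this is where the splitting of the homotopy exact sequence by $b_*$ and the conventions set up in this section must be handled with care, but it amounts only to routine bookkeeping once the Galois saturation $N'$ has been introduced.
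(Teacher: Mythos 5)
Your argument takes a genuinely different, more group-theoretic route than the paper's. The paper proceeds geometrically via Stoll's Weil-restriction device: it descends $Z$ to a torsor $Z_K$ over a finite extension $K/F$, forms $\Res_{K/F}Z_K$, pulls back along the unit $X\to \Res_{K/F}(X\times_F \Spec K)$, twists so that the resulting torsor lifts $b$, and passes to the geometrically connected component through $b$, which is a torsor for its stabilizer; hypothesis (i) enters precisely through $\Res_{K/F}G$. You instead work entirely inside $\pi_1$, replacing Weil restriction by the Galois saturation $N'=\bigcap_\sigma{}^\sigma N$ of the open kernel, so that hypothesis (i) is effectively subsumed by the standing assumption, stated just before the lemma, that $\mathcal{P}$ is closed under finite products and subgroups --- a nice simplification, and the two constructions do produce essentially the same object: a connected component of the pullback of $\Res_{K/F}Z_K$ corresponds on $\pi_1(\overline{X},b)$ exactly to $N'$. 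Two points in your write-up deserve more care, though. First, the openness of the stabilizer $H$ of $N$ in $\Gamma_F$ is not a formal consequence of continuity alone; it uses that $\pi_1(\overline{X},b)$ is topologically finitely generated, or equivalently that $Z\to\overline{X}$ descends to some finite extension $K/F$ --- which is exactly the fact the paper invokes explicitly, so you are relying on the same ingredient, just tacitly. Second, the ``converse to Lemma~\ref{calculationlemma}'' --- that a $\Gamma_F$-equivariant continuous surjection $\pi_1(\overline{X},b)\twoheadrightarrow G_*(\overline{F})$ onto a finite \'etale $F$-group scheme $G_*$ genuinely arises from a geometrically connected $G_*$-torsor over $X$ lifting $b$ --- is true but not proved anywhere in this paper, and it is more than bookkeeping: one has to produce the scheme $Y$, its $G_*$-action over $F$, and check compatibility of twists. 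The paper's geometric construction sidesteps this entirely by building the torsor as a scheme from the start. In short, your route buys conceptual transparency and shows hypothesis (i) is inessential given closure under products, at the cost of having to supply (or cite) the dictionary between $\Gamma_F$-equivariant finite quotients of $\pi_1(\overline{X},b)$ and pointed geometrically connected torsors over $X$.
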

\begin{proof}
Let $Z\to \overline{X}$ be a connected right torsor for some finite group $G$ which has $\mathcal{P}$. Then $Z$ is, in fact, defined over some finite extension $K$ of $F$, i.e. $Z$ arises as the extension of scalars of some geometrically connected right torsor $Z_K\to X\times_F\Spec K.$ Following the proof of Stoll \cite{S}, Lemma 5.7, we obtain a geometrically connected right torsor
$$\Res_{K/F} Z_K\to \Res_{K/F} (X\times_F\Spec K)$$ for the group scheme $\Res_{K/F} G.$ By assumption, this still has $\mathcal{P}.$ Pulling this back to $X$ via the canonical morphism $X\to \Res_{K/F} (X\times_F\Spec K)$, we obtain a torsor $Y\to X$ for $\Res_{F/K} G.$ From this, we obtain the diagram
$$\begin{CD}
Y\times_F\Spec K@>>>( \Res_{K/F} Z_K)\times_F\Spec K@>>>Z_K\\
@VVV@VVV@VVV\\
X\times_F\Spec K @>>>(\Res_{K/F} (X\times_F\Spec K))\times_F\Spec K@>>> X\times_F\Spec K,
\end{CD}$$
the two right horizontal arrows being the canonical maps, which come from the adjoint property of extension of scalars and Weil restriction. In particular, $Y\times_F\Spec \overline{F}$ maps to $Z.$
After possibly twisting $Y$, we may assume that $Y$ lifts the rational point $b\in X(F).$ Hence, a connected component $C$ of this twist will have a rational point, which will therefore be geometrically connected. This connected component is a torsor for its stabilizer, which still has property $\mathcal{P}$ by assumption. Since $\overline{C}$ visibly maps to $Z$, the claim follows.
\end{proof}\\
$\mathbf{Remark.}$ There is a small error in \cite{S}, p. 364, where it is claimed that an irreducible component of a torsor is itself a torsor for its stabilizer. This is inaccurate, as the group scheme $\boldsymbol{\mu}_3\to \Spec \Q$ shows (it is a torsors for itself, and it has an irreducible component isomorphic to $\Spec \Q[X]/\langle X^2+X+1\rangle$ whose stabilzer is trivial). However, the following is true: If $Y\to X$ is a torsor for a finite étale group scheme $G$ over $F$ and $Y_0\subseteq Y$ is an irreducible component which is \it geometrically \rm irreducible, then the stabilizer of $\overline{Y}_0$ is defined over $F$, and $Y_0$ is a torsor for  its stabilizer. The same remark applies to our proof of Lemma \ref{surviveslemmaII}.\\
\\
We have already observed that the topological group $\pi_1(\overline{X},b)$ is isomorphic to the inverse limit of the (opposite groups of) the automorphism groups of the $Y^\alpha$. The previous lemma shows that we can also understand the action of $\Gal(\overline{F}/F)$ on $\pi_1(\overline{X},b)$ in profinite terms: Suppose $Y\to X$ and $Z\to X$ are geometrically connected right torsors for some finite $\et$ale group schemes $G$ and $H$ over $F$. They induce Galois covers $\overline{Y}\to \overline{X}$ and $\overline{Z}\to\overline{X}$ with Galois groups $G(\overline{F})^{\mathrm{op}}$ and $H(\overline{F})^{\mathrm{op}}$, respectively. Suppose that $\overline{Z}\leq \overline{Y}$ with respect to the ordering on the set of all Galois covers of $\overline{X}$ introduced before. If we now define $\tau:=[Y_b]\in H^1(\Gamma_F,G)$ and $\mu:=[Z_b]\in H^1(H,G)$, we obtain a $\Gamma_F$-equivariant map
$G^\tau\to H^\mu.$ The inverse system that arises this way is cofinal with the inverse system that arises from the set of all Galois covers of $\overline{X}$. Hence we obtain

\begin{corollary}
There is a $\Gamma_F$-equivariant isomorphism
$$\pi_1(\overline{X},b)\cong \varprojlim_i G_i^{\tau_i},$$ where $\{Y^i\to X\}$ is a cofinal family of torsors for finite étale group schemes over $F$ ($G_i$ being the structure group of $Y^i$) and the maps between them are induced from the pro-representing system specified in section \ref{galoisactionsection}. Analogously, there is a $\Gamma_F$-equivariant isomorphism
$$\Delta^M_n\cong \varprojlim_i G_i^{\tau_i},$$ where $\{Y^i\to X\}$ is a cofinal family of torsors for finite étale group schemes over $F$ of nilpotency class $\leq n$ and of order divisible only by primes in $M$. 
\end{corollary}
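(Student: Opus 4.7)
The plan is to obtain the isomorphism from Lemma \ref{calculationlemma} by assembling its conclusion across a whole cofinal family, and then to deduce the $\Delta_n^M$ version from the universal property of $\Delta_n^M$ together with a second application of Lemma \ref{cofinalsystemsexistencelemma}. Concretely, first I would invoke Lemma \ref{cofinalsystemsexistencelemma} (for the trivial property $\mathcal{P}$, which obviously satisfies (i) and (ii) since Weil restrictions and subgroup schemes of finite $\et$ale group schemes are again finite $\et$ale) to obtain a cofinal family $\{Y^i\to X\}$ of geometrically connected torsors with structure groups $G_i$. For each $i$, Lemma \ref{calculationlemma} produces a $\Gamma_F$-equivariant continuous homomorphism $\pi_1(\overline{X},b)\to G_i^{\tau_i}$ with $\tau_i = [Y^i_b]\in H^1(\Gamma_F, G_i)$.

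Next I would assemble these into a single map to the inverse limit. Compatibility of the maps with the transition maps of the cofinal system is exactly what was verified in the paragraph immediately preceding the statement: for $\overline{Y}^j \geq \overline{Y}^i$ the induced homomorphism $G_j^{\tau_j}\to G_i^{\tau_i}$ is $\Gamma_F$-equivariant (this uses the distinguished geometric points $p_i$ from the pro-representing system, which is the reason we took the $\tau_i$ with respect to those points). Thus the individual maps glue to a $\Gamma_F$-equivariant continuous homomorphism
$$\pi_1(\overline{X},b)\;\longrightarrow\;\varprojlim_i G_i^{\tau_i}.$$
That this is an isomorphism of topological groups (forgetting the Galois action) is just the statement that $\{\overline{Y}^i\}$ is cofinal in the system of all Galois covers of $\overline{X}$, so the limit $\varprojlim_i G_i(\overline{F})$ agrees with the limit over the full pro-representing system, which by the isomorphism recalled in Section \ref{galoisactionsection} is $\pi_1(\overline{X},b)$.

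For the second assertion, I would apply exactly the same strategy to the property $\mathcal{P}_{n,M}$ consisting of finite groups of nilpotency class $\leq n$ with order supported only at primes in $M$. Condition (ii) is immediate. For condition (i), one uses that $(\Res_{K/F} G)\times_F \overline{F}\cong \prod_{\sigma\colon K\hookrightarrow \overline{F}} G^\sigma$, which is a finite direct product and therefore preserves both nilpotency class $\leq n$ and the restriction on the primes dividing the order; so Lemma \ref{cofinalsystemsexistencelemma} yields a cofinal family within this class. Combining with the universal property of $\Delta_n^M$ stated in Section 1 — continuous homomorphisms from $\pi_1(\overline{X},b)$ to any finite discrete group in $\mathcal{P}_{n,M}$ factor uniquely through $\Delta_n^M$ — identifies $\Delta_n^M$ with $\varprojlim_i G_i(\overline{F})$ for such a cofinal family, and the $\Gamma_F$-equivariance is transported through exactly as in the first part.

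The only genuinely delicate point is checking that the maps in the cofinal system really are $\Gamma_F$-equivariant after one twists each factor by its own cocycle $\tau_i$; but this has already been dispatched in the discussion preceding the corollary, so the proof largely reduces to bookkeeping. The main technical step that deserves explicit care is thus the verification that $\mathcal{P}_{n,M}$ is stable under Weil restriction, as sketched above.
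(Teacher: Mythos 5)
Your proposal is correct and follows essentially the same route as the paper: the corollary is obtained by combining Lemma \ref{calculationlemma} with the $\Gamma_F$-equivariance of the transition maps discussed just before the statement, the existence of cofinal families from Lemma \ref{cofinalsystemsexistencelemma}, and (for the second assertion) the universal property of $\Delta_n^M$. The only place you supply slightly more detail than the text is the explicit verification that nilpotency class $\leq n$ and $M$-order are preserved by Weil restriction, which is a welcome but minor elaboration.
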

$\mathbf{Example.}$ Let $E$ be an elliptic curve over $F$. From the self-duality of elliptic curves it follows easily that the system of Galois coverings of $\overline{E}:=E\times_F\Spec\overline{F}$ defined by
$$[n]\colon \overline{E}\to \overline{E}$$ for $n\in \N$ is cofinal for the system of all Galois coverings of $\overline{E}$. Now, the map $[n]\colon E\to E$ over $F$ defines a (right) torsor for the finite $\et$ale group scheme $E[n]$ over $F$. Therefore, the system $[n]\colon E\to E$, where $n\in \N$, is a cofinal family of torsors in the sense of Definition 1.3. Since everything is Abelian in this example, no twisting is necessary, and we obtain the familiar isomorphism
$$\pi_1(\overline{E},0)\cong \prod_{p\, \text{prime}} T_p(E) =\varprojlim E[n],$$ where the set $\N$ of natural numbers is ordered by divisibility.

\section{Reciprocity and the descent obstruction}
Let $n\in \N$ and suppose that $M$ is a finite set of odd primes. 
Then let $\Delta_n$ be the quotient of $\Delta^{[1]}$ (which is, by definition, the maximal prime-to-2-quotient of $\pi_1(\overline{X}, b))$ modulo the $n$-th object of the (closed) lower central series of $\pi_1(\overline{X},b)$ and let $\Delta_n^M$ be the maximal pro-$M$ quotient of $\Delta_n$. 
Recall the map
$$j_n^M\colon X(\A_F)\to \prod\nolimits^{'} H^1(\Gamma_v, \Delta^M_n)$$
given by 
$$(x_v)_v\mapsto([\pi_1(\overline{X};b, x_v)\overset{\pi_1(\overline{X},b)}{\times} \Delta^M_n])_v$$ (see \cite{K}, Chapter 3). Here, we are using continuous non-Abelian cohomology. The next proposition shows how the non-Abelian reciprocity law fits into our framework:
\begin{proposition}
The diagram \label{cartesianproposition}
$$\begin{CD}
X(\A_F)^{n+1}_n@>>>X(\A_F)\\
@VVV @VV{j_n}V\\
\varprojlim_M \varinjlim_S H^1(\Gamma_F^S,\Delta^M_n)@>>> \varprojlim_M \prod_v^{'}H^1(\Gamma_v, \Delta^M_n)
\end{CD}$$
is Cartesian.
\end{proposition}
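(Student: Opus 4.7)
The plan is to reduce the Cartesian property to a statement about kernels. By the construction recalled in Section~1,
$$X(\A_F)^{n+1}_n \;=\; (prec_n\circ j_n)^{-1}(0) \;=\; j_n^{-1}\bigl(\ker(prec_n)\bigr),$$
and the localisation map
$$\ell\colon \varprojlim_M\varinjlim_S H^1(\Gamma_F^S,\Delta_n^M)\longrightarrow \varprojlim_M \prod\nolimits_v^{'} H^1(\Gamma_v,\Delta_n^M)$$
is injective. The Cartesian property is therefore equivalent to the identification $\ker(prec_n)=\im(\ell)$, together with the easy verification that $\im(\ell)$ is already contained in $\varprojlim_M W(\Delta_n^M)$ (the source of $prec_n$): any global class $\eta$ projects to a global class, which is killed by the connecting map $\delta_n^g$ because it lifts to $\Delta_n^M$ globally, and hence $\ell(\eta)\in W(\Delta_n^M)$ by the very definition of $W$.

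For $n=1$ the identification $\ker(prec_1)=\im(\ell)$ is precisely Poitou--Tate duality as cited in Section~1 (\cite{N}, Theorem~8.6.7), and the claim follows immediately. For $n\geq 2$ I would argue inductively using the central extension
$$0\to T_n^M\to \Delta_n^M\to \Delta_{n-1}^M\to 0.$$
Given $\xi\in W(\Delta_n^M)$, the inductive hypothesis yields a global class $\bar\eta\in \varinjlim_S H^1(\Gamma_F^S,\Delta_{n-1}^M)$ with $\ell(\bar\eta)=p_{n-1}(\xi)$; the vanishing $\delta_n^g(\bar\eta)=0$ (which holds by definition of $W$) then allows one to lift $\bar\eta$ to a global class $\eta\in \varinjlim_S H^1(\Gamma_F^S,\Delta_n^M)$. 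Now $\ell(\eta)$ agrees with $\xi$ at the $\Delta_{n-1}^M$ level, and centrality of the extension permits one to form a well-defined ``difference'' $\xi\cdot \ell(\eta)^{-1}\in \prod\nolimits_v^{'} H^1(\Gamma_v,T_n^M)$. By construction, $prec_n$ sends $\xi$ to the image of this Abelian difference under the Poitou--Tate pairing $\prod\nolimits_v^{'} H^1(\Gamma_v,T_n^M)\to H^1(\Gamma_F,D(T_n))^\vee$. Hence $prec_n(\xi)=0$ if and only if the difference comes from a global class in $\varinjlim_S H^1(\Gamma_F^S,T_n^M)$ (Abelian Poitou--Tate for $T_n^M$); adjusting $\eta$ by the corresponding global class, we then realise $\xi=\ell(\eta)$, completing the induction.

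The main obstacle is rigorously constructing the non-Abelian difference $\xi\cdot \ell(\eta)^{-1}\in \prod\nolimits_v^{'} H^1(\Gamma_v,T_n^M)$, checking its independence of the chosen lift $\eta$, and commuting all of this with the operations $\varprojlim_M$, $\varinjlim_S$, and the restricted products over places. This is exactly what Kim's construction of $prec_n$ in \cite{K}, \S 3 is designed to accomplish; the conditions [Coh 1] and [Coh 2], together with the injectivity of the localisation map used throughout, are precisely what guarantee well-definedness at each stage of the induction.
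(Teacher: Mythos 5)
Your proposal takes essentially the same route as the paper: the heart of the matter is the identification $\ker(prec_n)=\im(\ell)$, and the base case is exactly Poitou--Tate. There are two points worth flagging.

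First, the paper does not re-derive the kernel identity; it simply cites Kim's Proposition~2.4, which asserts $\ker(prec_n)=\varprojlim_M\varinjlim_S H^1(\Gamma_F^S,\Delta_n^M)$ for all $n$. Your sketch of how one would prove this (lift at level $n-1$, form the central difference, apply Abelian Poitou--Tate for $T_n^M$, adjust) is in spirit a reconstruction of Kim's argument, which is fine, but you correctly identify at the end that making the ``difference'' well-defined and compatible with the limits is exactly the content of Kim's construction; so in practice one would cite it rather than redo it.

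Second, and more substantively, your reduction ``Cartesian $\Leftrightarrow$ ($\ker(prec_n)=\im(\ell)$ and $\im(\ell)\subseteq\varprojlim_M W(\Delta_n^M)$)'' is not quite complete as stated. The set $X(\A_F)^{n+1}_n$ is defined as a subset of $X(\A_F)_n$, so when you write $X(\A_F)^{n+1}_n=j_n^{-1}(\ker(prec_n))$ you must still show that any adelic point with $j_n((x_v)_v)\in\im(\ell)$ already lies in $X(\A_F)_n$. That is where the induction hypothesis genuinely enters: one uses the Cartesian property at level $n-1$ to get $(x_v)_v\in X(\A_F)^n_{n-1}$ (since $p_{n-1}(j_n((x_v)_v))$ is the image of a global class by the definition of $W$), and the vanishing of $\delta^g_n$ on that global class to get $(x_v)_v\in X(\A_F)_n$. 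In your write-up the induction hypothesis is invoked instead inside the proof of the kernel identity, where it is not actually needed — $p_{n-1}(\xi)$ comes from a global class by the very definition of $W(\Delta_n^M)$, not by induction. The paper handles all of this cleanly by chasing the three-row commutative diagram, obtaining $P\subseteq X(\A_F)^n_{n-1}$ from the IH, then $P\subseteq X(\A_F)_n$ from the factorisation through $\varprojlim_M\varinjlim_S H^1(\Gamma_F^S,\Delta_n^M)$, and finally $P=X(\A_F)^{n+1}_n$ from Kim's Proposition~2.4. Reorganise your argument to put the IH in the right place and the proof becomes the paper's.
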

\begin{proof}
We shall prove this by induction on $n$. If $n=1$, we have $X(\A_F)_n=X(\A_F)$ by definition. The set $X(\A_F)^2_1$ is defined to be the set of elements of $X(\A_F)=X(\A_F)_1$ on which the composition
$$X(\A_F)\overset{j_1}{\longrightarrow} \varprojlim_M \prod_v^{'}H^1(\Gamma_v, \Delta^M_1)\overset{prec_1}{\longrightarrow} H^1(\Gamma_F, D(\Delta_1))^\vee$$ vanishes. But the kernel of $prec_1$ is exactly the image of $\varprojlim_M \varinjlim_S H^1(\Gamma_F^S,\Delta^M_1)$ by Poitou-Tate duality (see \cite{K}, p. 322), so the claim follows in this case. Now assume that we know the statement for $n-1$. Consider the commutative diagram

$$\begin{CD}
P@>>>X(\A_F)\\
@VVV@VVV\\
\varprojlim_M\varinjlim_S H^1(\Gamma_F^S, \Delta_n^M)@>>>\varprojlim_M\prod_v^{'} &H^1(\Gamma_v, \Delta_n^M)\\
@VVV@VVV\\
\varprojlim_M\varinjlim_S H^1(\Gamma_F^S, \Delta_{n-1}^M)@>>>\varprojlim_M\prod_v^{'} &H^1(\Gamma_v, \Delta_{n-1}^M)
\end{CD},$$
where $P$ is the pullback of $X(\A_F)$ with respect to the middle horizontal arrow. By the induction hypothesis, we know that the pullback of $X(\A_F)$ along the bottom horizontal arrow is equal to $X(\A_F)^n_{n-1}.$ This clearly contains $P$. Consider the composition
$$P\to \varprojlim_M\varinjlim_S H^1(\Gamma_F^S, \Delta_{n-1}^M)\overset{prec^{n}_{n-1}}{\longrightarrow}  \varprojlim_M\varinjlim_S H^2(\Gamma_F^S, T_{n}^M).$$ This composition vanishes because the first map factors through $\varprojlim_M\varinjlim_S H^1(\Gamma_F^S, \Delta_{n}^M).$ Hence we find $P\subseteq X(\A_F)_n.$ However, by \cite{K}, Proposition 2.4, we have $\ker prec_{n}=\varprojlim_M\varinjlim_S H^1(\Gamma_F^S, \Delta_n^M)$.
By definition, $X(\A_F)_n^{n+1}$ is the set of elements of $X(\A_F)_n$ on which $prec_n\circ j_n$ vanishes, so we find $P=X(\A_F)_n^{n+1}.$ This implies the result.
\end{proof}

Now let $v$ be a place of $F$ and let $x_v\in X(F_v).$ Then the local Galois group $\Gal(\overline{F}_v/F_v)=:\Gamma_v\subseteq \Gamma_F$ acts on the right path torsor $\pi_1(\overline{X},b,x_v)$ in a fashion compatible with the action on $\pi_1(\overline{X},b).$
\begin{proposition}
Assume that $Y\to X$ is a geometrically connected right torsor for some finite $\acute{e}$tale $F$-group scheme $G$. Let $\tau\colon \Gamma_v\to G(\overline{F})$ be the cocycle given by ${}^\sigma y=y\cdot \tau(\sigma)$, where $y$ is the distinguished geometric point of $Y_b$ coming from the pro-representing system we specified in Section \ref{galoisactionsection}.  Its class in $H^1(\Gamma_v,G)$ is $[Y_b]$. Then the image of $[\pi_1(\overline{X};b,x_v)]$ in $H^1(\Gamma_v, G^\tau)$ under the map \label{imagesproposition}
$$H^1(\Gamma_v, \pi_1(\overline{X},b))\to H^1(\Gamma_v, G^\tau)$$ is $[Y_{x_v}\overset{G}{\times}Y_b^{-1}].$
\end{proposition}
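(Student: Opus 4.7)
The plan is to perform an explicit 1-cocycle computation in the twisted group $G^\tau$. Fix a geometric point $x\in Y_{x_v}(\overline{F})$ and define $\eta\colon\Gamma_v\to G(\overline{F})$ by ${}^\sigma x=x\cdot\eta(\sigma)$, so that $[\eta]=[Y_{x_v}]$ in $H^1(\Gamma_v,G)$. Unwinding the notation $[Y_{x_v}\overset{G}{\times}Y_b^{-1}]$, this class is represented by $\sigma\mapsto\eta(\sigma)\tau(\sigma)^{-1}$ viewed as a map into $G^\tau$, where the Galois action on $G^\tau$ is $\sigma\cdot_\tau g=\tau(\sigma)\sigma(g)\tau(\sigma)^{-1}$. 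A short direct check using the cocycle conditions on $\eta$ and $\tau$ confirms that this is indeed a 1-cocycle for the twisted action.

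Next I would compute the image of $[\pi_1(\overline{X};b,x_v)]$ in $H^1(\Gamma_v,G^\tau)$. Any path $\gamma\in\pi_1(\overline{X};b,x_v)$ is an isomorphism of fibre functors and in particular induces a $G(\overline{F})$-equivariant bijection $Y_b\to Y_{x_v}$; write $\gamma(y)=x\cdot g_0$ for a unique $g_0\in G(\overline{F})$. Let $\alpha(\sigma)=\gamma^{-1}\circ{}^\sigma\gamma\in\pi_1(\overline{X},b)$ be the associated 1-cocycle, so the image class in $H^1(\Gamma_v,G^\tau)$ is represented by $\sigma\mapsto\pi(\alpha(\sigma))$, where $\pi$ is the equivariant map of Lemma \ref{calculationlemma}. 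I would then evaluate both sides of $({}^\sigma\gamma)(y)=(\gamma\circ\alpha(\sigma))(y)$. The right-hand side equals $x g_0\pi(\alpha(\sigma))$ by the defining property $\delta(y)=y\cdot\pi(\delta)$ of $\pi$ together with the $G$-equivariance of $\gamma$. The left-hand side expands as $\sigma(\gamma(\sigma^{-1}(y)))=\sigma(\gamma(y\cdot\sigma^{-1}(\tau(\sigma)^{-1})))=x\eta(\sigma)\sigma(g_0)\tau(\sigma)^{-1}$ using the cocycle condition on $\tau$, the $G$-equivariance of $\gamma$, and the definition of $\eta$. Comparing the two expressions yields
$$\pi(\alpha(\sigma))=g_0^{-1}\eta(\sigma)\sigma(g_0)\tau(\sigma)^{-1}.$$

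To finish, I would verify that this cocycle differs from $\sigma\mapsto\eta(\sigma)\tau(\sigma)^{-1}$ by the 1-coboundary $\sigma\mapsto g_0^{-1}\cdot(\sigma\cdot_\tau g_0)=g_0^{-1}\tau(\sigma)\sigma(g_0)\tau(\sigma)^{-1}$ in $G^\tau$, which is an immediate algebraic identity. Consequently both classes coincide in $H^1(\Gamma_v,G^\tau)$, proving the proposition. The only real obstacle is the bookkeeping: keeping straight the conventions for the right $G$-action on $Y$, for the right action of $\pi_1(\overline{X},b)$ on the path torsor by precomposition, and for the twisted Galois action on $G^\tau$. Once these are pinned down, everything reduces to routine manipulation of cocycles, and independence of the computation from the auxiliary choices of $\gamma$ and $x$ follows from the coboundary step at the end.
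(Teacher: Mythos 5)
Your proof is correct, but it proceeds by a different route than the paper. The paper avoids any cocycle computation: it constructs an explicit map of $\Gamma_v$-sets
$$\pi_1(\overline{X};b,x_v)\overset{\pi_1(\overline{X},b)}{\times} G^\tau\longrightarrow Y_{x_v}\overset{G}{\times} Y_b^{-1},\qquad (\gamma,g)\mapsto (\gamma(y\cdot g),\,y),$$
checks well-definedness, $G^\tau$-equivariance, and $\Gamma_v$-equivariance directly from functoriality of the fibre functor, and then invokes the general principle that an equivariant map of torsors over a group is automatically an isomorphism. You instead pin down both sides by explicit cocycles: choosing a path $\gamma$ and a geometric point $x\in Y_{x_v}$, you derive $\pi(\alpha(\sigma))=g_0^{-1}\eta(\sigma)\sigma(g_0)\tau(\sigma)^{-1}$ and exhibit the coboundary $g_0^{-1}(\tau(\sigma)\sigma(g_0)\tau(\sigma)^{-1})$ that makes it cohomologous to $\eta(\sigma)\tau(\sigma)^{-1}$. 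Both arguments are sound; the paper's is cleaner in that it never needs to choose a path $\gamma$ or a point $x$ at all, so there is nothing to check about independence of auxiliary choices, whereas your version makes the underlying cocycle identity completely explicit (which is arguably more transparent for a reader wanting to see exactly how the twist $G^\tau$ enters). One small thing worth making explicit in your write-up is the identification of the representing cocycle $\sigma\mapsto\eta(\sigma)\tau(\sigma)^{-1}$ for $[Y_{x_v}\overset{G}{\times}Y_b^{-1}]$ under the same sign/ordering conventions forced by Lemma \ref{calculationlemma} (namely the twisted action $\sigma\cdot_\tau g=\tau(\sigma)\sigma(g)\tau(\sigma)^{-1}$); you state it correctly, but since several inequivalent conventions are in circulation, a short verification would remove any ambiguity.
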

\begin{proof}
Let $y\in Y_b(\overline{F})$ be the distinguished geometric point. Define a map of sets
$$\pi_1(\overline{X};b,x_v)\overset{\pi_1(\overline{X},b)}{\times} G^\tau\to Y_{x_v}\overset{G}{\times} Y_b^{-1}$$
by
$$(\gamma,g)\mapsto (\gamma(y\cdot g),y).$$
One checks directly that this is well-defined and commutes with the action of $G^\tau$, using the functoriality of the fibre functor. Galois equivariance also follows directly. Since $\Gamma_v$-equivariant maps of torsors are isomorphisms, the claim follows.
\end{proof}\\
The remainder of this paper will consist in proving that Kim's construction can be understood in terms of the finite descent obstruction. 
The following is a slight modification of Definition 5.3.1 form \cite{Sk}. We shall use the notion of \it twists of torsors by cocycles; \rm see \cite{Sk}, p. 21 for an introduction to this operation.
\begin{definition} 
We say that an adelic point $(x_v)_v$ \rm survives \it a torsor $\pi\colon Y\to X$ for some finite $\acute{e}$tale $F$-group scheme $G$ if \label{survivesdefinition}
$$(x_v)_v\in \bigcup_{\sigma\in H^1(\Gamma_F,G)} \pi^{\sigma}(Y^\sigma(\A_F)).$$
Moreover, we define the set $X(\A_F)^{\mathrm{Nil}_n}$ to be the set of adelic points of $X$ which survive all finite $\acute{e}$tale group schemes $G$ of odd order with $G(\overline{F})$ of nilpotency class $\leq n$, and define
$$X(\A_F)^{\mathrm{Nil}}:=\bigcap_{n\in\N} X(\A_F)^{\mathrm{Nil}_n}.$$ 
\end{definition}
The restriction that the order of $G$ be odd has been added in the definition above only because it will make the statement of some theorems later more convenient.

\begin{proposition} Let $(x_v)_v\in X(\A_F)^{n+1}_n.$ Let $\pi\colon Y\to X$ be a finite geometrically connected $\acute{e}$tale covering which is a torsor for some finite $\acute{e}$tale group scheme $G$, which we assume to be of nilpotency class $\leq n$ and of order divisible only by primes in $M$, for some finite set $M$ of odd prime numbers. Then there is an element $\sigma\in H^1(\Gamma_F, G)$  such that
$$(x_v)_v\in \pi^\sigma(Y^\sigma(\A_F)).$$
\end{proposition}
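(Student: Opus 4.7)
The plan is to extract the desired class $\sigma$ from the Cartesian-diagram characterisation of $X(\A_F)_n^{n+1}$ provided by Proposition~\ref{cartesianproposition}, push it along a suitable quotient of $\Delta_n^M$, and then untwist. The main bookkeeping obstacle will be keeping track of the non-Abelian twists so that the global class one produces really matches the local classes $[Y_{x_v}]$.

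First I would set $\tau:=[Y_b]\in H^1(\Gamma_F,G)$. By Lemma~\ref{calculationlemma} the torsor $\pi$ induces a $\Gamma_F$-equivariant continuous surjection $\pi_1(\overline{X},b)\twoheadrightarrow G^\tau$, and because $G$ has nilpotency class $\leq n$ and order supported on the set $M$ of odd primes, the universal property of $\Delta_n^M$ recalled in Section~1 factors this through a $\Gamma_F$-equivariant continuous surjection $q\colon \Delta_n^M\twoheadrightarrow G^\tau$. After enlarging any admissible set $S$ for $\Delta_n^M$ so that the $\Gamma_F$-action on $G$ (hence on $G^\tau$) also factors through $\Gamma_F^S$, the map $q$ induces a commutative square relating $H^1(\Gamma_F^S,-)$ and $\prod_v^{'}H^1(\Gamma_v,-)$ for $\Delta_n^M$ and $G^\tau$, compatible with passage to the limit over $M$ and $S$.

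Next, since $(x_v)_v\in X(\A_F)_n^{n+1}$, Proposition~\ref{cartesianproposition} yields a class $\xi\in \varprojlim_M\varinjlim_S H^1(\Gamma_F^S,\Delta_n^M)$ lifting $j_n((x_v)_v)$. Set $\mu:=q_\ast(\xi)\in H^1(\Gamma_F,G^\tau)$. By commutativity, for each place $v$ the localization $\mu|_v$ is the image of $[\pi_1(\overline{X};b,x_v)\overset{\pi_1(\overline{X},b)}{\times}\Delta_n^M]$ under the induced map $H^1(\Gamma_v,\Delta_n^M)\to H^1(\Gamma_v,G^\tau)$, which by Proposition~\ref{imagesproposition} is precisely $[Y_{x_v}\overset{G}{\times}Y_b^{-1}]$.

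Finally, I would apply the standard twisting formalism of non-Abelian Galois cohomology (see \cite{Sk}, Section~2.1): there is a bijection $t_\tau\colon H^1(\Gamma_F,G)\to H^1(\Gamma_F,G^\tau)$ compatible with restriction to every decomposition group, and under its local analogue one has $[Y_{x_v}]\mapsto [Y_{x_v}\overset{G}{\times}Y_b^{-1}]$. Setting $\sigma:=t_\tau^{-1}(\mu)$, one obtains $\sigma|_v=[Y_{x_v}]$ for every $v$, which is equivalent to the triviality of the fibre of the twisted torsor $Y^\sigma\to X$ over each $x_v$. This forces each $x_v$ to lift to $Y^\sigma(F_v)$, giving $(x_v)_v\in\pi^\sigma(Y^\sigma(\A_F))$ as required. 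The delicate part of the argument is verifying that $t_\tau$ commutes simultaneously with localization and with the pushforward along $q$; this should reduce to the explicit cocycle calculation of Lemma~\ref{calculationlemma}.
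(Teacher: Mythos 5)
Your proof is correct and takes essentially the same route as the paper. The paper also sets $\tau=[Y_b]$, obtains a $\Gamma_F$-equivariant map $\Delta_n^M\to G^\tau$ (phrased there as arising from the cover $Y^\tau\to X$, which has trivial fibre over $b$, but this is the same map as the one you produce via Lemma~\ref{calculationlemma}), pushes the global class from Proposition~\ref{cartesianproposition} forward, identifies its localizations via Proposition~\ref{imagesproposition}, and finishes with the twist bijection between $H^1(\Gamma_F,G^\tau)$ and $H^1(\Gamma_F,G)$; your version only makes the final twisting step more explicit by naming $t_\tau$ and checking its compatibility with localization, which is a modest clarification rather than a different argument.
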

\begin{proof}
Because $(x_v)_v\in X(\A_F)^{n+1}_n$, the image of $(x_v)_v$ in $\prod^{'}_vH^1(\Gamma_v, \Delta^M_n)$ actually lies in the image of
$$\varinjlim_S H^1(\Gamma^S_F, \Delta^M_n)\to \prod^{'}_vH^1(\Gamma_v, \Delta^M_n)$$ by Proposition \ref{cartesianproposition}.
The maps $H^1(\Gamma_F^S, \Delta^M_n)\to H^1(\Gamma_F, \Delta^M_n)$ are compatible, so they give rise to a map
$$\varinjlim_S H^1(\Gamma_F^S, \Delta^M_n)\to H^1(\Gamma_F, \Delta^M_n).$$ Furthermore, since $b\in X(F)$, there is $\tau\in H^1(\Gamma_F,G)$ such that $b\in \pi^\tau(Y^\tau(\A_F))$. Now, the twisted inner form $G^\tau$ of $G$ can be seen as a group isomorphic to $G$ with a $\Gamma_F$-action on it. The finite $\et$ale cover $Y^\tau\to X$ gives rise to a Galois equivariant map
$\Delta^M_n\to G^\tau,$ and hence maps
$H^1(\Gamma_F, \Delta^M_n)\to H^1(\Gamma_F, G^\tau)$ and $H^1(\Gamma_v, \Delta^M_n)\to H^1(\Gamma_v, G^\tau).$ By Proposition \ref{imagesproposition}, we already know that the image of $[\pi_1(\overline{X}; b, x_v)\overset{\pi_1(\overline{X},b)}{\times} \Delta^M_n]$ under this last map is $[Y^\tau_{x_v}]$.
Because the diagram
$$\begin{CD}
H^1(\Gamma_F, \Delta^M_n)@>>> H^1(\Gamma_v, \Delta^M_n)\\
@VVV@VVV\\
H^1(\Gamma_F, G^\tau) @>>> H^1(\Gamma_v, G^\tau)
\end{CD}$$
is commutative for all places $v$ of $F$, we can define $\sigma$ to be the image of $(x_v)_v$ in $H^1(\Gamma_F, G^\tau).$ Indeed, the image of $\sigma$ in $H^1(\Gamma_v, G^\tau)$ is $[Y^\tau_{x_v}]$, so $(x_v)_v$ lifts to $(Y^\tau)^\sigma.$ Now we can use the canonical bijection between $H^1(\Gamma_F, G^\tau)$ and $H^1(\Gamma_F, G)$ to deduce the result.
\end{proof} \\
For the next lemma, we need the following\\
\\
$\mathbf{Claim}.$ Let $i=1,2$ and let $Y_i\to X$ be torsors for finite $\et$ale group schemes $G_i$. Suppose that $Y_1$ is geometrically connected, and assume that there is a map of torsors $\overline{Y}_1\to \overline{Y}_2$ over $\overline{X}$. Then there is a cocycle $\sigma\colon \Gamma_F\to G_2(\overline{F})$ such that the map $\overline{Y}_1\to \overline{Y}_2$ descends to a map $Y_1\to Y_2^\sigma$ over $X$.\\
\\
\begin{proof}
The proof of Stoll \cite{S}, Lemma 5.6, can be taken without modification.
\end{proof}

\begin{lemma} 
Let $Y^i\to X$ be a cofinal family of torsors for finite $\acute{e}$tale group schemes of odd order and nilpotency class $\leq n$, which exists by Lemma \ref{cofinalsystemsexistencelemma}. Let $G_i$ be the structure group scheme of $Y^i\to X.$ Then an adelic point of $X$ is contained in $X(\A_F)^{\mathrm{Nil}_n}$ if and only if it survives all $Y^i$. \label{surviveslemmaII}
\end{lemma}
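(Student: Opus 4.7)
The inclusion $X(\A_F)^{\mathrm{Nil}_n}\subseteq\{(x_v)_v:(x_v)_v\text{ survives every }Y^i\}$ is immediate from the definition, since each $Y^i$ is itself a torsor for a finite \'etale group scheme of odd order and nilpotency class $\leq n$.

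For the converse, suppose $(x_v)_v$ survives every $Y^i$, and let $\pi\colon Z\to X$ be an arbitrary torsor for a finite \'etale $F$-group scheme $H$ with $H(\overline{F})$ of odd order and nilpotency class $\leq n$. The plan is to choose a geometrically connected component of $\overline{Z}$, apply the cofinality hypothesis to map some $\overline{Y}^i$ into it, descend the resulting map over $F$ by the Claim preceding the lemma, and finally transport a lift of $(x_v)_v$ from an appropriate twist of $Y^i$ to a twist of $Z$.

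Concretely, pick any geometrically connected component $W\subseteq\overline{Z}$. Its stabilizer $H_W\subseteq H(\overline{F})$ is a subgroup of $H$, hence still of odd order and of nilpotency class $\leq n$, and $W\to\overline{X}$ is a connected right torsor for $H_W$. By cofinality there exists an index $i$ and a map of torsors $\overline{Y}^i\to W$ over $\overline{X}$, equivariant with respect to some homomorphism $\varphi\colon G_i\to H_W\hookrightarrow H$. Composing with $W\hookrightarrow\overline{Z}$ gives a map of torsors $\overline{Y}^i\to\overline{Z}$ equivariant for $\varphi$, and the Claim then produces a cocycle $\sigma\in Z^1(\Gamma_F,H)$ such that this map descends to a morphism $Y^i\to Z^\sigma$ of $X$-schemes. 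By hypothesis there exists $\tau\in H^1(\Gamma_F,G_i)$ and a lift $(y_v)_v\in Y^{i,\tau}(\A_F)$ of $(x_v)_v$. Twisting the morphism $Y^i\to Z^\sigma$ by $\tau$ yields an $X$-morphism $Y^{i,\tau}\to(Z^\sigma)^{\varphi_\ast\tau}$; the target is again an inner twist of the original $Z$ by an element of $Z^1(\Gamma_F,H)$, and the image of $(y_v)_v$ supplies the required lift.

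The only real content lies in the bookkeeping of the twists: one must verify that the standard functoriality ``twisting the source by $\tau$ forces the target to be twisted by the push-forward $\varphi_\ast\tau$'' survives the $F$-descent provided by the Claim, and that the composite cocycle defining $(Z^\sigma)^{\varphi_\ast\tau}$ really does arise as an element of $Z^1(\Gamma_F,H)$ so that the resulting scheme is a twist of $Z$ in the sense of Definition \ref{survivesdefinition}. I expect this cocycle calculus to be the main technical hurdle, but it is a formal consequence of the usual yoga of twisting once the Claim is in hand.
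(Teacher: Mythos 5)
Your proposal is correct and follows essentially the same route as the paper's proof (which itself follows Stoll, Lemma 5.7): pick a connected component of $\overline{Z}$, observe its stabilizer is still of odd order and nilpotency class $\leq n$, apply cofinality to get $\overline{Y}^i\to\overline{Z}$, descend this via the Claim to a morphism $Y^i\to Z^\sigma$, and transport a lift by twisting. The paper is terser on the last step — it simply asserts that surviving $Y^i$ implies surviving $Z$ once the morphism $Y^i\to Z^\sigma$ exists — whereas you spell out the pushforward-cocycle bookkeeping ($Y^{i,\tau}\to(Z^\sigma)^{\varphi_\ast\tau}$), but this is the same standard twisting yoga.
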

\begin{proof}
We follow the proof of Stoll \cite{S}, Lemma 5.7. One direction is obvious, so let us consider the other one. Suppose $Y\to X$ is any right torsor for some finite $\et$ale $F$-group scheme $G$ satisfying the hypotheses of the Lemma. Then $\overline{Y}\to \overline{X}$ is a (not necessarily connected) right torsor for the finite group $G(\overline{F}).$ Let $Y_0$ be any connected component of $\overline{Y}$, and let $G_0$ be its stabilizer. Then $Y_0$ is a connected torsor for $G_0$, and $G_0$ still has odd order and nilpotency class $\leq n.$ Hence, by assumption, there is a member $Y^i$ of our cofinal family such that we have a map $\overline{Y}^i\to Y_0\to \overline{Y}$ over $\overline{X}$. By the claim above the lemma, there is a twist $Y^\sigma$ of $Y$ such that we obtain a morphism $Y^i\to Y^\sigma$. 
Now consider any adelic point $P$ of $X$. By assumption, $P$ will survive $Y^i$. Hence it will also survive $Y$.
\end{proof}

\begin{corollary}
We have $X(\A_F)^{n+1}_n\subseteq X(\A_F)^{\mathrm{Nil_n}}.$ In particular, \label{firstinclusioncorollary}
$$X(\A_F)_\infty\subseteq X(\A_F)^{\mathrm{Nil}}.$$
\end{corollary}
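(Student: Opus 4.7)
The plan is to assemble the corollary from the two main tools just established: the preceding proposition (which handles a single geometrically connected torsor whose structure group has nilpotency class $\leq n$ and order divisible only by primes in a fixed finite set $M$ of odd primes) and Lemma \ref{surviveslemmaII} (which reduces checking membership in $X(\A_F)^{\mathrm{Nil}_n}$ to a check against a cofinal family).

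First I would fix a cofinal family $\{Y^i \to X\}$ of geometrically connected torsors for finite \'etale group schemes $G_i$ of odd order and nilpotency class $\leq n$, whose existence is guaranteed by Lemma \ref{cofinalsystemsexistencelemma}. For each $i$, the order of $G_i$ is a finite integer, so it is divisible only by primes in some finite set $M_i$ of odd primes. Take $(x_v)_v \in X(\A_F)^{n+1}_n$. The preceding proposition, applied to the torsor $Y^i \to X$ with the set of primes $M_i$, produces a class $\sigma_i \in H^1(\Gamma_F, G_i)$ such that $(x_v)_v \in \pi_i^{\sigma_i}((Y^i)^{\sigma_i}(\A_F))$. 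Hence $(x_v)_v$ survives each member of the cofinal family, and by Lemma \ref{surviveslemmaII} we conclude $(x_v)_v \in X(\A_F)^{\mathrm{Nil}_n}$, proving the first inclusion $X(\A_F)^{n+1}_n \subseteq X(\A_F)^{\mathrm{Nil}_n}$.

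For the ``in particular'' clause, I would use the chain
$$X(\A_F) = X(\A_F)_1 \supseteq X(\A_F)^2_1 \supseteq X(\A_F)_2 \supseteq \cdots \supseteq X(\A_F)_n \supseteq X(\A_F)^{n+1}_n \supseteq X(\A_F)_{n+1} \supseteq \cdots$$
recalled in Section~1, which gives $X(\A_F)_\infty \subseteq X(\A_F)^{n+1}_n$ for every $n$. Combining with the inclusion just shown yields $X(\A_F)_\infty \subseteq X(\A_F)^{\mathrm{Nil}_n}$ for all $n$, and intersecting over $n$ gives $X(\A_F)_\infty \subseteq X(\A_F)^{\mathrm{Nil}}$.

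There is no real obstacle here: the substantive content was already discharged by the previous proposition (where the Cartesian diagram of Proposition \ref{cartesianproposition} and the identification of images from Proposition \ref{imagesproposition} were combined) and by Lemma \ref{surviveslemmaII}. The only thing one needs to be slightly careful about is that the previous proposition requires the auxiliary finite set $M$ of odd primes, but this is harmless since each individual $G_i$ in the cofinal family has finite order and its prime divisors may simply be taken as $M_i$.
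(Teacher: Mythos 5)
Your proof is correct and matches the paper's (implicit) reasoning: the corollary is left without an explicit proof precisely because it follows immediately by combining the preceding proposition with Lemma \ref{surviveslemmaII}, exactly as you do, and the ``in particular'' clause follows from the filtration $X(\A_F)_\infty \subseteq X(\A_F)^{n+1}_n$. Your remark about choosing $M_i$ per torsor is the right way to handle the auxiliary set of primes.
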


This concludes the proof of one direction of our main result, and we shall now consider the opposite direction. Suppose that we have an adelic point $(x_v)_v\in X(\A_F)$ with the property that for all torsors $\pi\colon Y\to X$ for some finite $\et$ale group scheme $G$ (of nilpotency class $\leq n$ and of order divisible only by primes in $M$) there exists an element $\sigma\in H^1(\Gamma_F,G)$ such that $(x_v)_v\in \pi^\sigma(Y^\sigma(\A_F)).$ In order to show that $(x_v)_v\in X(\A_F)^{n+1}_n$, it suffices to show that the family of local classes $[\pi_1(\overline{X};b,x_v)\overset{\pi_1(\overline{X},b)}{\times} \Delta^M_n]\in \prod_vH^1(\Gamma_v, \Delta^M_n)$ comes from a global class, i.e. from an element of $H^1(\Gamma_F, \Delta^M_n):$ 

\begin{lemma}
Let $(x_v)_v \in X(\A_F)$. Let $Y\to X$ be a geometrically connected torsor for a finite $\acute{e}$tale group scheme $G$. Let $\tau:=[Y_b]\in H^1(\Gamma_F, G)$ and suppose that there is a class $\mu\in H^1(\Gamma_F, G)$ such that $(x_v)_v$ lifts to $Y^\mu$. Then the image of $([\pi_1(\overline{X};b, x_v)])_v$ in $\prod_v H^1(\Gamma_v, G^\tau)$ under the map
$$\prod_vH^1(\Gamma_v, \pi_1(\overline{X},b))\to \prod_v H^1(\Gamma_v, G^\tau)$$ lies in the image of $H^1(\Gamma_F, G^\tau)\to \prod_vH^1(\Gamma_v, G^\tau)$. \label{liesintheimagelemma}
\end{lemma}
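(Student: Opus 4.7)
\medskip

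\noindent\textbf{Proof plan.} By Proposition~\ref{imagesproposition}, the image of $[\pi_1(\overline{X};b,x_v)]$ in $H^1(\Gamma_v, G^\tau)$ equals $[Y_{x_v} \overset{G}{\times} Y_b^{-1}]$, so the task is to exhibit a single global class $c \in H^1(\Gamma_F, G^\tau)$ whose restriction to each $\Gamma_v$ is $[Y_{x_v} \overset{G}{\times} Y_b^{-1}]$. The assignment $[Z] \mapsto [Z \overset{G}{\times} Y_b^{-1}]$ defines the standard twisting bijection of pointed sets
$$t_\tau \colon H^1(\Gamma, G) \longrightarrow H^1(\Gamma, G^\tau),$$
for any closed subgroup $\Gamma$ of $\Gamma_F$, and it is natural with respect to restriction to decomposition subgroups $\Gamma_v$ (see \cite{Sk}, Section~2.2). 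Consequently, the image in Proposition~\ref{imagesproposition} is $t_\tau([Y_{x_v}])$, and by naturality it suffices to produce a single global class $\eta \in H^1(\Gamma_F, G)$ with $\eta|_v = [Y_{x_v}]$ for every $v$; then $c := t_\tau(\eta)$ does the job.

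The existence of such an $\eta$ is what the hypothesis provides, though it requires a short non-abelian cocycle computation. Since $(x_v)_v$ lifts to the twisted torsor $Y^\mu \to X$, there exist $F_v$-points $y_v \in Y^\mu(F_v)$ above $x_v$ for each $v$, so that $[(Y^\mu)_{x_v}] = 0$ in $H^1(\Gamma_v, G)$. Because $Y^\mu$ is a global twist of $Y$ (parameterized by a cohomology class over $F$ constructed from $\mu$ and $\tau$), the local classes $[Y_{x_v}]$ and $[(Y^\mu)_{x_v}]$ differ at each $v$ only by the restriction of one and the same global class; combined with the triviality of $[(Y^\mu)_{x_v}]$, this expresses $[Y_{x_v}]$ as the localization at $v$ of a single global class $\eta \in H^1(\Gamma_F, G)$.

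This ``global-difference'' step, in which the chosen $F_v$-lifts $y_v$ enter crucially, is the main technical obstacle: it requires a careful cocycle calculation using the construction of twists of torsors from \cite{Sk}, Section~2.2, together with the fact that $\overline{Y}$ and $\overline{Y^\mu}$ become isomorphic over $\overline{F}$ as $G_{\overline{F}}$-torsors. Once $\eta$ is in hand, naturality of $t_\tau$ gives
$$c|_v = t_\tau(\eta)|_v = t_\tau(\eta|_v) = t_\tau([Y_{x_v}]) = [Y_{x_v} \overset{G}{\times} Y_b^{-1}]$$
for every place $v$, completing the proof.
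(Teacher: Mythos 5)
Your overall plan agrees with the paper's: apply Proposition~\ref{imagesproposition} to identify each local class with $[Y_{x_v}\overset{G}{\times}Y_b^{-1}]$, and then exploit the hypothesis that $(x_v)_v$ lifts to $Y^\mu$ to show that these local classes all come from one global class. However, the step you yourself flag as the ``main technical obstacle'' is precisely the content of the lemma, and you leave it undone. Saying that $[Y_{x_v}]$ and $[(Y^\mu)_{x_v}]$ ``differ at each $v$ only by the restriction of one and the same global class'' is not well posed here: these are classes in non-abelian $H^1$, where there is no subtraction, and moreover they live in different pointed sets ($H^1(\Gamma_v,G)$ versus $H^1(\Gamma_v,G^\mu)$). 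The precise statement you need is that the twisting bijection $\theta_\mu\colon H^1(\Gamma,G)\to H^1(\Gamma,G^\mu)$ is defined over $F$ and natural in $\Gamma$, sends $[Y_{x_v}]$ to $[Y^\mu_{x_v}]$, and sends $[\mu]$ to the trivial class; hence the triviality of each $[Y^\mu_{x_v}]$ forces $[Y_{x_v}]=[\mu]|_v$ for every $v$. In other words, the global class $\eta$ you are looking for is simply $\eta=[\mu]$, and identifying it is the whole point — your write-up neither names it nor carries out the cocycle check. (Also note a small slip: $[(Y^\mu)_{x_v}]$ lives in $H^1(\Gamma_v,G^\mu)$, not $H^1(\Gamma_v,G)$.)

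The paper gets to the same conclusion more directly, without invoking the twisting bijection at all: it observes that $[Y_{x_v}\overset{G}{\times}Y_b^{-1}]=[Y^\mu_{x_v}\overset{G^\mu}{\times}(Y^\mu_b)^{-1}]$ — the $\mu$-twist acts diagonally on the contracted product and therefore cancels — and then, since $Y^\mu_{x_v}$ has an $F_v$-point, each local class is the restriction of the single global class $[G^\mu\overset{G^\mu}{\times}(Y^\mu_b)^{-1}]\in H^1(\Gamma_F,G^\tau)$. If you want to salvage your route, replace the imprecise ``global difference'' language by the twisting bijection $\theta_\mu$, state $\eta=[\mu]$ explicitly, and verify the identity $\theta_\mu([Y_{x_v}])=[Y^\mu_{x_v}]$; as written, the proposal describes a plan rather than a proof.
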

\begin{proof}
Let $v$ be a place of $F$. By Proposition \ref{imagesproposition}, we already know that the image of $[\pi_1(\overline{X};b, x_v)]$ in $H^1(\Gamma_v, G^\tau)$ is $[Y_{x_v}\overset{G}{\times} Y_b^{-1}].$ But this is the same as $[Y^\mu_{x_v}\overset{G^\mu}{\times} (Y^\mu_b)^{-1}].$ By assumption, $Y^\mu_{x_v}$ has $F_v$-points, so the class
$$[G^\mu\overset{G^\mu}{\times} (Y_b^\mu)^{-1}]\in H^1(\Gamma_F, G^\tau)$$ will do. Note that $\mu$ does not depend on the place $v$. This implies the claim.
\end{proof}
\begin{lemma}\rm (Minkowski-Hermite) \it
Let $G$ be a finite discrete (not necessarily Abelian) group with a continuous action of $\Gamma_F$. Then the localization map \label{finitefibreslemma}
$$H^1(\Gamma_F, G)\to \prod_v H^1(\Gamma_v, G)$$ has finite fibres.
\end{lemma}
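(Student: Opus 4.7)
The plan is to follow a non-abelian analogue of the classical Shafarevich-type finiteness argument, combining a twisting reduction with the Hermite-Minkowski theorem on number fields of bounded degree unramified outside a finite set of places.

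First I would reduce to showing that the kernel of the localization map is finite. Fix $(\xi_v)_v$ in the image and a representative cocycle $\sigma_0\colon \Gamma_F\to G$ with $[\sigma_0|_{\Gamma_v}]=\xi_v$ for all $v$. The twisting bijections $H^1(\Gamma_F, G) \leftrightarrow H^1(\Gamma_F, G^{\sigma_0})$ and $H^1(\Gamma_v, G) \leftrightarrow H^1(\Gamma_v, G^{\sigma_0})$, compatible with restriction, identify the fibre over $(\xi_v)_v$ with the kernel of the analogous localization map for the twist $G^{\sigma_0}$. Since $G^{\sigma_0}$ is again a finite discrete $\Gamma_F$-group, it suffices to prove that $\Ker(H^1(\Gamma_F, G)\to\prod_v H^1(\Gamma_v, G))$ is finite for any such $G$.

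Next I would attach to each continuous cocycle $\sigma\colon\Gamma_F\to G$ a global number field. Since $G$ is finite, the action of $\Gamma_F$ on $G$ factors through $\Gal(K/F)$ for some finite Galois extension $K/F$. The assignment $\gamma\mapsto(\sigma(\gamma),\gamma|_K)$ is a continuous homomorphism $\Gamma_F\to G\rtimes\Gal(K/F)$, whose kernel cuts out a finite Galois extension $L_\sigma/F$ with $[L_\sigma:F]\leq |G|\cdot[K:F]$. Let $S$ be the finite set consisting of the archimedean places of $F$ together with the places ramified in $K/F$. The crucial observation is the following ramification bound: if $v\notin S$ and $\sigma|_{\Gamma_v}$ is a coboundary, so $\sigma(\gamma)=g_v\cdot{}^\gamma g_v^{-1}$ for some $g_v\in G$, then for $\gamma\in I_v$ the inertia acts trivially on $G$ (because $v$ is unramified in $K/F$), giving ${}^\gamma g_v=g_v$ and hence $\sigma|_{I_v}=1$. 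It follows that $L_\sigma/F$ is unramified outside $S$.

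Finally, by Hermite-Minkowski there are only finitely many extensions of $F$ of degree at most $|G|\cdot[K:F]$ unramified outside $S$. For each such $L$, there are only finitely many continuous homomorphisms $\Gal(L/F)\to G\rtimes\Gal(K/F)$ lifting the projection to $\Gal(K/F)$, and each of these determines at most one cocycle $\sigma$. Thus only finitely many cocycles arise, and a fortiori only finitely many classes lie in the kernel of the localization map.

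I expect the main obstacle to be setting up the twisting reduction precisely: in non-abelian cohomology the fibres of the localization must be computed by passing to twisted forms rather than by abelian subtraction, so one has to verify carefully that the local twisting bijections are compatible with the global one. Once this reduction is in place, the ramification computation in Step~3 and the appeal to Hermite-Minkowski in Step~4 are essentially formal.
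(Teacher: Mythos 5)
Your proof is correct and is the standard Hermite--Minkowski argument; the paper offers no details of its own and simply cites Stix, Lemma 146, which uses essentially this method. The twisting reduction to the kernel, the ramification observation in Step 3 (local triviality of $\sigma$ at $v$ together with unramifiedness of $K/F$ at $v$ forces $\sigma|_{I_v}=1$, so $L_\sigma/F$ is unramified outside the fixed finite set $S$), and the final appeal to finiteness of bounded-degree extensions unramified outside $S$ are all handled correctly.
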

\begin{proof} See Stix \cite{St}, Lemma 146.
\end{proof}
\begin{proposition}
Let $n\in \N$. Suppose that $(x_v)_v\in X(\A_F)_n$ survives all torsors $Y\to X$ for finite $\acute{e}$tale group schemes $G$ of nilpotency class $\leq n$ and of order divisible by primes in $M$. Then $j_n^M((x_v)_v)$ lies in the image of $H^1(\Gamma_F,\Delta_n^M)\to \prod_v H^1(\Gamma_v, \Delta^M_n).$ \label{localglobalproposition1}
\end{proposition}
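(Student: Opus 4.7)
The plan is to reduce the global problem to a compatible system of finite-level problems, apply Lemma~\ref{liesintheimagelemma} at each finite level, and then pass to the limit by Minkowski--Hermite and compactness.

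First, I invoke the second statement of the Corollary after section~\ref{cofinalsystemdefinition} to write $\Delta^M_n \cong \varprojlim_i G_i^{\tau_i}$ $\Gamma_F$-equivariantly, where $\{Y^i \to X\}$ is a cofinal family of geometrically connected torsors for finite \'etale group schemes $G_i$ of odd order, nilpotency class $\leq n$, and order divisible only by primes in $M$ (such a family exists by Lemma~\ref{cofinalsystemsexistencelemma}), and $\tau_i := [Y^i_b] \in H^1(\Gamma_F,G_i)$. Both the global and local cohomology groups then factor through inverse limits: $H^1(\Gamma_F,\Delta^M_n)$ and $\prod_v H^1(\Gamma_v,\Delta^M_n)$ map onto compatible families in $\varprojlim_i H^1(\Gamma_F, G_i^{\tau_i})$ and $\varprojlim_i\prod_v H^1(\Gamma_v, G_i^{\tau_i})$ respectively, and the localization map is compatible with this.

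Next, for each index $i$, the hypothesis that $(x_v)_v$ survives the torsor $Y^i \to X$ furnishes a class $\mu_i \in H^1(\Gamma_F, G_i)$ such that $(x_v)_v$ lifts to $(Y^i)^{\mu_i}$. Applying Lemma~\ref{liesintheimagelemma} with $Y = Y^i$, $G = G_i$, $\tau = \tau_i$, $\mu = \mu_i$, we conclude that the image of $j^M_n((x_v)_v)$ in $\prod_v H^1(\Gamma_v, G_i^{\tau_i})$ lies in the image of the localization map $H^1(\Gamma_F,G_i^{\tau_i}) \to \prod_v H^1(\Gamma_v, G_i^{\tau_i})$. Denote by $A_i$ the (nonempty) fibre of this localization map over the image of $j^M_n((x_v)_v)$. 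The transition maps $G_i^{\tau_i} \to G_j^{\tau_j}$ for $j \leq i$ from the pro-representing system induce maps $A_i \to A_j$ that respect the defining localization condition, so $\{A_i\}$ forms a cofiltered inverse system.

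Now comes the key step: Lemma~\ref{finitefibreslemma} (Minkowski--Hermite) tells us that each $A_i$ is finite. A cofiltered inverse limit of nonempty finite sets is nonempty, so we may choose a compatible system $(\alpha_i) \in \varprojlim_i A_i$. Since the $G_i^{\tau_i}$ are finite discrete $\Gamma_F$-groups and $\Delta^M_n = \varprojlim_i G_i^{\tau_i}$ as topological $\Gamma_F$-groups, continuous (non-abelian) cohomology commutes with the inverse limit in the sense that a compatible family $(\alpha_i)$ is the same datum as a single class $\alpha \in H^1(\Gamma_F, \Delta^M_n)$, and by construction $\alpha$ localizes to $j^M_n((x_v)_v)$. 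The main obstacle is the last sentence, namely verifying that a compatible system of cohomology classes genuinely assembles to a class in the continuous non-abelian $H^1$ of the profinite group $\Delta^M_n$; this is where the finiteness of the fibres $A_i$ combined with the Mittag-Leffler condition for the transition maps $A_i \to A_j$ is used, so that the inverse limit of cocycles representing $(\alpha_i)$ converges to a continuous cocycle valued in $\Delta^M_n$.
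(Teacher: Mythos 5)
Your argument is essentially identical to the paper's proof: both use the cofinal family $(Y^i,G_i)$ from Lemma~\ref{cofinalsystemsexistencelemma} to write $\Delta^M_n\cong\varprojlim_i G_i^{\tau_i}$, apply Lemma~\ref{liesintheimagelemma} to show the level-$i$ fibres (your $A_i$, the paper's $S_i$) are nonempty, apply Lemma~\ref{finitefibreslemma} to show they are finite, and conclude that $\varprojlim_i A_i$ is nonempty. Your closing remark about the need to lift a compatible system of classes to a genuine element of $H^1(\Gamma_F,\Delta^M_n)$ is a reasonable caution (resolved, as you gesture at, by running the same compactness argument one level down at the cocycle level, using finiteness of the coboundary orbits), a point the paper's proof states but does not elaborate.
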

\begin{proof}
Let $(Y^i,G_i)$ be a cofinal family of such torsors, which exists by Lemma \ref{cofinalsystemsexistencelemma}. Define $\tau_i:=[Y^i_b]\in H^1(\Gamma_F, G_i).$ Then we obtain an isomorphism
$\Delta_n^M\cong \varprojlim G_i^{\tau_i}$ and hence an isomorphism
$$H^1(\Gamma_v, \Delta_n^M)\to \varprojlim H^1(\Gamma_v, G_i^{\tau_i})$$ for each place $v$ of $F$. Then the image of $[\pi_1(\overline{X};b,x_v)]$ in $H^1(\Gamma_v, \Delta^M_n)$ corresponds to the projective system $[Y^i_{x_v}\overset{G_i}{\times} (Y_b^i)^{-1}]$. For each $i$, the set $S_i$ of preimages of $([Y^i_{x_v}\overset{G_i}{\times} (Y_b^i)^{-1}])_v\in \prod_v H^1(\Gamma_v, G_i^{\tau_i})$ in $H^1(\Gamma_F, G_i^{\tau_i})$ is non-empty by Lemma \ref{liesintheimagelemma}, and it is finite by Lemma \ref{finitefibreslemma}. Furthermore, the maps $H^1(\Gamma_F, G_j^{\tau_j})\to H^1(\Gamma_F, G_k^{\tau_k})$ for $\overline{Y}^k\leq \overline{Y}^j$ descend to maps $S_j\to S_k$. It follows that the limit $\varprojlim S_i$ is non-empty. Any element of this limit can be viewed as an element of $H^1(\Gamma_F, \Delta_n^M)$. Hence the result follows.
\end{proof}
\begin{lemma}
Let $M$ be a finite set of odd prime numbers and let $S$ be an admissible finite set of places of $F$ (i.e., the action of $\Gamma_F$ on $\Delta^M_n$ factors through $\Gamma_F^S$). Let $\alpha\in H^1(\Gamma_F, \Delta^M_n)$ be such that its image in $H^1(\Gamma_v, \Delta^M_n)$ comes from $H^1(\Gamma_v/I_v, (\Delta^M_n)^{I_v})$ for all places $v\not\in S.$ Then $\alpha$ lies in the image of $H^1(\Gamma_F^S, \Delta^M_n)\to H^1(\Gamma_F, \Delta^M_n).$ \label{localgloballemma}
\end{lemma}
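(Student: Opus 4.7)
The plan is to argue via the non-abelian inflation-restriction sequence. Set $H:=\Gal(\overline{F}/F_S)$ so that
$$1\to H\to \Gamma_F\to \Gamma_F^S\to 1$$
is exact. Admissibility of $S$ means that the action of $\Gamma_F$ on $\Delta_n^M$ factors through $\Gamma_F^S$, so $H$ acts trivially and $(\Delta_n^M)^H=\Delta_n^M$. The (non-abelian) inflation-restriction sequence of pointed sets
$$1\to H^1(\Gamma_F^S,\Delta_n^M)\xrightarrow{\mathrm{inf}} H^1(\Gamma_F,\Delta_n^M)\xrightarrow{\mathrm{res}_H} H^1(H,\Delta_n^M)$$
is exact in the middle, so it suffices to show that $\mathrm{res}_H\alpha$ is the distinguished element.

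Fix a continuous cocycle $\tilde\alpha\colon\Gamma_F\to\Delta_n^M$ representing $\alpha$. Because $H$ acts trivially on $\Delta_n^M$, the cocycle identity forces $\tilde\alpha|_H$ to be a continuous homomorphism. For $v\notin S$, the extension $F_S/F$ is unramified at $v$, hence $I_v\subseteq H$, and $I_v$ then acts trivially on $\Delta_n^M$. The hypothesis says $\alpha|_{\Gamma_v}$ is inflated from $\Gamma_v/I_v$, which forces $\mathrm{res}_{I_v}\alpha=0$; but for a trivial action the only coboundary is the constant cocycle $e$, so $\tilde\alpha|_{I_v}\equiv e$ as a function, not merely as a cohomology class.

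Let $N:=\ker(\tilde\alpha|_H)\subseteq H$. Then $N$ contains $I_v$ for every $v\notin S$, and in fact $N$ is normal in the whole of $\Gamma_F$: for $\sigma\in\Gamma_F$ and $n\in N$, using $\tilde\alpha(n)=e$ and that $n\in H$ acts trivially on $\Delta_n^M$, the cocycle identity yields
$$\tilde\alpha(\sigma n\sigma^{-1})=\tilde\alpha(\sigma n)\cdot{}^{\sigma n}\!\tilde\alpha(\sigma^{-1})=\tilde\alpha(\sigma)\cdot{}^{\sigma}\!\tilde\alpha(\sigma^{-1})=\tilde\alpha(\sigma\sigma^{-1})=e.$$
Hence $\overline{F}^N$ is a Galois extension of $F$ that is unramified outside $S$, so by maximality of $F_S$ we have $\overline{F}^N\subseteq F_S$, i.e.\ $N\supseteq H$. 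Combined with $N\subseteq H$ this gives $N=H$ and $\tilde\alpha|_H\equiv e$, so $\mathrm{res}_H\alpha=0$ as required.

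The only step that is not entirely formal is the verification that $N$ is normal in all of $\Gamma_F$ and not merely in $H$; everything else is either the defining property of $F_S$ as the maximal extension unramified outside $S$, or the standard inflation-restriction sequence for (non-abelian) $H^1$.
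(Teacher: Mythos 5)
Your proof is correct and takes essentially the same approach as the paper: restrict a cocycle $\tilde\alpha$ representing $\alpha$ to $H=\Gal(\overline{F}/F_S)$, observe that it becomes a continuous homomorphism (since $H$ acts trivially by admissibility) vanishing on each $I_v$ with $v\notin S$, deduce that it vanishes on all of $H$, and conclude that $\tilde\alpha$ descends to a cocycle on $\Gamma_F^S$. Your explicit verification that $N=\ker(\tilde\alpha|_H)$ is normal in all of $\Gamma_F$ (not merely in $H$) is a nice self-contained substitute for the paper's citation of the fact that $\Gal(\overline{F}/F_S)$ is the closed normal subgroup of $\Gamma_F$ generated by the inertia groups at places outside $S$.
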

\begin{proof}
Pick a cocycle representing the class $\alpha\in H^1(\Gamma_F, \Delta^M_n)$, and denote it by $\alpha\colon \Gamma_F\to \Delta^M_n.$ By our assumption on $\alpha$, we have $\alpha(I_v)=0$ for all $v\not\in S$. But since the group $\ker(\Gamma_F\to \Gamma_F^S)$ is the closed normal subgroup generated by all $I_v$ for $v\not\in S$ (this follows, for example, from \cite{Na}, Chapter 6.1, Corollary 2, p. 265, and a limit argument) and the cocycles in question are continuous, this implies that $\alpha(\ker(\Gamma_F\to \Gamma_F^S))=0.$ Using that $\ker(\Gamma_F\to \Gamma^S_F)$ acts trivially on $\Delta^M_n$ since $S$ is admissible, we find that $\alpha$ defines a class in $H^1(\Gamma_F^S, \Delta^M_n).$ This concludes the proof.
\end{proof}\\
What follows now is our main result:
\begin{theorem}
Let $X$ be a geometrically integral smooth quasi-projective variety over $F$ which admits a rational point. Assume that $X$ satisfies the conditions [Coh 1] and [Coh 2] from \cite{K}, p. 315 (see also the Appendix of this paper). Then we have \label{mainresult} $X(\A_F)^{\mathrm{Nil_n}}= X(\A_F)^{n+1}_n$ for all $n\in \N$. In particular,
$$X(\A_F)^{\mathrm{Nil}}=X(\A_F)_\infty.$$
\end{theorem}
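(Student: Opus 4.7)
The plan is to prove $X(\A_F)^{\mathrm{Nil}_n}=X(\A_F)^{n+1}_n$ for every $n\in\N$; intersecting over $n$ and using the sandwich $X(\A_F)_{n+1}\subseteq X(\A_F)^{n+1}_n\subseteq X(\A_F)_n$ will then yield $X(\A_F)^{\mathrm{Nil}}=X(\A_F)_\infty$. One inclusion $X(\A_F)^{n+1}_n\subseteq X(\A_F)^{\mathrm{Nil}_n}$ is already Corollary \ref{firstinclusioncorollary}, so only the reverse direction needs work.

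Fix $n$ and $(x_v)_v\in X(\A_F)^{\mathrm{Nil}_n}$. For each finite set $M$ of odd primes, the adelic point $(x_v)_v$ survives every torsor for a finite \'etale group scheme of nilpotency class $\leq n$ and order divisible only by primes in $M$, so Proposition \ref{localglobalproposition1} produces a class $\alpha_M\in H^1(\Gamma_F,\Delta_n^M)$ localizing to $j_n^M((x_v)_v)$. Since $j_n^M((x_v)_v)$ lies in the restricted product, it is unramified outside a finite set of places, so enlarging an admissible $S_M$ to contain that set and invoking Lemma \ref{localgloballemma} produces a lift $\tilde\alpha_M\in H^1(\Gamma_F^{S_M},\Delta_n^M)$.

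The injectivity of the localization map $H^1(\Gamma_F^S,\Delta_n^M)\hookrightarrow \prod_v^{'}H^1(\Gamma_v,\Delta_n^M)$ from \cite{K}, p.~319, makes $\tilde\alpha_M$ uniquely determined in $\varinjlim_S H^1(\Gamma_F^S,\Delta_n^M)$ by $j_n^M((x_v)_v)$. Combined with the functoriality of the period map in $M$, this uniqueness makes the family $(\tilde\alpha_M)_M$ a coherent system in $\varprojlim_M\varinjlim_S H^1(\Gamma_F^S,\Delta_n^M)$, and the Cartesian property of Proposition \ref{cartesianproposition} then immediately gives $(x_v)_v\in X(\A_F)^{n+1}_n$, as required.

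The main obstacle is that the statement of Proposition \ref{localglobalproposition1} carries an auxiliary hypothesis $(x_v)_v\in X(\A_F)_n$ which is not available a priori; however, inspection of its proof reveals that it uses only the survival condition (via Lemma \ref{liesintheimagelemma}, the Minkowski--Hermite finiteness of Lemma \ref{finitefibreslemma}, and the non-emptiness of an inverse limit of non-empty finite sets), so the application is sound. In fact, projecting $\tilde\alpha_M$ through the central extension $0\to T_n^M\to\Delta_n^M\to\Delta_{n-1}^M\to 0$ produces a global lift of $j_{n-1}^M((x_v)_v)$ to $H^1(\Gamma_F^{S_M},\Delta_{n-1}^M)$, which automatically kills the connecting-map obstruction and confirms $(x_v)_v\in X(\A_F)_n$ as a by-product, so the argument is self-contained at each level $n$ with no inductive bootstrap needed.
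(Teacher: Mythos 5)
Your proposal is correct and follows essentially the same route as the paper: it invokes Proposition \ref{localglobalproposition1} to produce a global class for each $M$, uses the restricted-product property with Lemma \ref{localgloballemma} to lift into $\varinjlim_S H^1(\Gamma_F^S,\Delta_n^M)$, applies the injectivity of the localization map to obtain compatibility across nested $M$, and concludes with the Cartesian square of Proposition \ref{cartesianproposition}. Your observation that the hypothesis $(x_v)_v\in X(\A_F)_n$ in Proposition \ref{localglobalproposition1} is extraneous (its proof only invokes the survival condition via Lemmas \ref{liesintheimagelemma} and \ref{finitefibreslemma}) correctly addresses a subtlety the paper's proof passes over without comment, and your by-product derivation of membership in $X(\A_F)_n$ via the central extension is sound.
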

\begin{proof}
By Corollary \ref{firstinclusioncorollary}, all we have to show is the inclusion "$\subseteq$". Suppose $(x_v)_v \in X(\A_F)^{\mathrm{Nil}_n}.$ By Proposition \ref{localglobalproposition1} we already know that $j_n^M((x_v)_v)$ lies in the image of 
$$H^1(\Gamma_F, \Delta^M_n)\to \prod_v H^1(\Gamma_v, \Delta_n^M)$$ for all finite sets $M$ of odd prime numbers. However, we know that $j_n^M((x_v)_v)$ lies in the restricted product 
$\prod^{'} H^1(\Gamma_v, \Delta_n^M).$ By \cite{K}, Chapter 2, we can find an admissible set $S$ of places of $F$ which contains all places $v$ such that the image of $j_n^M((x_v)_v)$ in $H^1(\Gamma_v, \Delta^M_n)$ does \it not \rm come from $H^1(\Gamma_v/I_v, (\Delta^M_n)^{I_v})$. From Lemma \ref{localgloballemma} it now follows that $j_n^M((x_v)_v)$ lies in the image of
$$\varinjlim H^1(\Gamma_F^S, \Delta_n^M)\to \prod\nolimits^{'} H^1(\Gamma_v, \Delta_n^M).$$

Now suppose $M\subseteq N$ are two finite sets of odd prime numbers. Consider the commutative diagram
$$\begin{CD}
\varinjlim H^1(\Gamma^S_F, \Delta_n^N)@>>>\prod^{'}_v H^1(\Gamma_v, \Delta^N_n)\\
@VVV@VVV\\
\varinjlim H^1(\Gamma^S_F, \Delta_n^M)@>>>\prod^{'}_v H^1(\Gamma_v, \Delta^M_n).
\end{CD}$$
The horizontal arrows are injective because the localization maps $$H^1(\Gamma^S_F, \Delta_n^M) \to \prod^{'}_v H^1(\Gamma_v, \Delta^M_n)$$ are injective (\cite{K}, p. 319). Hence, the preimage of $j_n^N((x_v)_v))$ in $\varinjlim _S H^1(\Gamma^S_F, \Delta_n^N)$ must be mapped to that of $j^M_n((x_v)_v)$ in $\varinjlim_S H^1(\Gamma^S_F, \Delta_n^M)$. This implies that the element
$$\varprojlim_M j_n^M((x_v)_v)\in \varprojlim_M \prod_v^{'} H^1(\Gamma_v, \Delta^M_n)$$ lies in the image of the map
$$\varprojlim_M\varinjlim H^1(\Gamma_F^S, \Delta_n^M)\to \varprojlim_M\prod\nolimits^{'} H^1(\Gamma_v, \Delta_n^M).$$
\end{proof}\\
$\mathbf{Remark}.$ The sets $X(\A_F)^{n+1}_{n}$ are only defined if $X$ has a rational point and satisfies the conditions [Coh1] and [Coh2] which are required for the construction of the non-Abelian reciprocity law. However, the sets $X(\A_F)^{\mathrm{Nil}_n}$ are defined without any restriction of this kind.

\section{Appendix}
\subsubsection{The conditions [Coh1] and [Coh2] for smooth curves}
In this Appendix, we shall prove that the conditions [Coh1] and [Coh2] are satisfied if $X$ is a smooth, proper, and integral algebraic curve over $F$ (as before, we shall fix $b\in X(F)$). Recall that [Coh1] and [Coh2] are the following claims: \\
{[Coh1]: For every finite place $v$ of $F,$ $n\geq 1,$ and finite set $M$ of odd prime numbers, the Abelian group $T_n^M$ is torsion-free}.\\
{[Coh2]: For all $v, M, n$ as in [Coh1], $T_n^M$ has no non-zero $\Gamma_v$-invariants}.\\
This was claimed without proof in \cite{K}, and we hope that verifying the conditions at least for curves will shed some light on the construction. Throughout this section, we shall make use of the following observation: For any (possibly infinite) set $M$ of (odd) prime numbers, we have
$$\Delta^M_n=\prod_{l\in M} \Delta^{\{l\}}_n.$$ This follows directly from the fact that a finite nilpotent group is, in a canonical way, the product of its Sylow subgroups (see \cite{Gor}, Chapter 2, Theorem 3.5). This observation also implies that the analogous claim holds for the graded pieces $T_n^M$ of the closed lower central series of $(\Delta^{[1]})^M.$ Let us begin with the following
\begin{proposition}
For any odd prime number $l$, the $\Z_l$-module $T_n^{\{l\}}$ is torsion-free and finitely generated. \label{torsionfreeproposition}
\end{proposition}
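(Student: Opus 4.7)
The plan is to reduce the statement to a classical theorem of Labute on the lower central series of discrete surface groups. Since $F$ has characteristic zero, the comparison between algebraic and topological fundamental groups (after fixing an embedding $\overline{F}\hookrightarrow \C$) identifies $\pi_1(\overline{X},b)$ with the profinite completion of the topological surface group $\Pi_g$ of genus $g=g(X)$. Because $l$ is odd, passing to the prime-to-$2$ quotient loses no information at $l$, so $(\Delta^{[1]})^{\{l\}}$ coincides with the pro-$l$ completion $\hat\Pi_{g,l}$ of $\Pi_g$, and $T_n^{\{l\}}$ is the $n$-th graded piece of the closed lower central series of $\hat\Pi_{g,l}$. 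The cases $g=0$ (where $\Pi_g$ is trivial, so every $T_n^{\{l\}}$ vanishes) and $g=1$ (where $\Pi_g\cong\Z^2$ is abelian, giving $T_1^{\{l\}}\cong\Z_l^2$ and $T_n^{\{l\}}=0$ for $n\geq 2$) are immediate, so we may assume $g\geq 2$.

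For $g\geq 2$ the key input is a theorem of Labute on the lower central series of one-relator groups, applied to the standard presentation $\Pi_g=\langle a_1,b_1,\ldots,a_g,b_g \mid \prod_{i=1}^g [a_i,b_i]\rangle$: the associated graded Lie algebra of $\Pi_g$ with respect to the lower central series is isomorphic to the quotient of the free Lie algebra on $2g$ generators over $\Z$ by the ideal generated by the single quadratic element $\sum_{i=1}^g [x_i,y_i]$. In particular, each discrete graded piece $\mathrm{gr}^n\Pi_g$ is a finitely generated \emph{free} abelian group, of rank computable by a Witt-type formula.

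It then remains to identify $T_n^{\{l\}}$ with the $l$-adic completion $\mathrm{gr}^n\Pi_g\otimes_{\Z}\Z_l$ of the corresponding discrete graded piece. This follows, by induction on $n$, from the fact that $\Pi_g$ is residually torsion-free nilpotent (itself a consequence of Labute), by applying the snake lemma to the central extensions
$$0\to \mathrm{gr}^n\Pi_g\to \Pi_g/\Pi_g^{(n+1)}\to \Pi_g/\Pi_g^{(n)}\to 0$$
and their pro-$l$ analogues (here $\Pi_g^{(n)}$ denotes the $n$-th term of the lower central series of $\Pi_g$); the base case reduces to the familiar identification $\Pi_g^{\mathrm{ab}}\otimes_{\Z}\Z_l\cong\Z_l^{2g}$. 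Once this identification is established, $T_n^{\{l\}}$ is manifestly a finitely generated free $\Z_l$-module, hence torsion-free.

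The main obstacle is thus the invocation of Labute's structural result on the graded Lie algebra of a surface group; once this is granted, the passage from the discrete to the pro-$l$ setting is a routine exercise in pro-$l$ completions of finitely generated nilpotent groups, and will yield both claims (finite generation and torsion-freeness) simultaneously.
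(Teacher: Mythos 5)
Your proposal follows essentially the same route as the paper: identify $\pi_1(\overline{X},b)$ with the profinite completion of the topological surface group $\Pi_g$, invoke Labute's theorem on the graded Lie algebra of a one-relator (surface) group to see that the discrete graded pieces $\mathrm{gr}^n\Pi_g$ are finitely generated free abelian, and then transfer this to the pro-$l$ graded pieces $T_n^{\{l\}}$. The one place where you diverge is the technical step identifying $T_n^{\{l\}}$ with $\mathrm{gr}^n\Pi_g\otimes_{\Z}\Z_l$: the paper cites Hilton--Roitberg (Theorem 2.2) for the exactness of profinite completion applied to the central extensions $0\to S_n\to G_n\to G_{n-1}\to 0$ of finitely generated nilpotent groups, whereas you sketch a snake-lemma argument using residual torsion-free nilpotence of $\Pi_g$; this is the same content, just argued more informally rather than by citation, and you would want to make precise exactly which exactness property of pro-$l$ completion is being used there (since completion is only right exact in general, and the injectivity of $\widehat{S_n}^{(l)}\to\widehat{G_n}^{(l)}$ is precisely the nontrivial point that Hilton--Roitberg supplies).
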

\begin{proof}
Let $G$ be the topological fundamental group of $X(\mathbf{C}).$ It is well-known that $G$ admits a presentation 
$$G=\langle a_1,..., a_g, b_1,..., b_g\mid [a_1b_1][a_2b_2]...[a_gb_g]=e\rangle$$ for some $g\geq 0.$ Letting $G^{(n)}$ denote the lower central series of $G$, we put $G_n:=G/G^{(n+1)},$ and we let $S_n$ denote the graded pieces. We obtain a short exact sequence of finitely generated nilpotent groups
$$0\to S_n\to G_n\to G_{n-1}\to 0$$ for $n\geq 2.$ Using the claim about the decomposition of $\Delta^M_n$ from the beginning of this section, together with the fact that the geometric étale fundamental group of $X$ is isomorphic to the profinite completion of $G$, it suffices to show that the kernel of the morphism of profinite completions $\widehat{G_n}\to \widehat{G_{n-1}}$ is torsion-free. By \cite{HR}, Theorem 2.2, this kernel is simply the profinite completion of $S_n.$ By \cite{Lab}, Theorem (p. 17), the groups $S_n$ are finitely generated free Abelian groups, which implies that their pro-$l$ completions are torsion-free and finitely generated as well. 
\end{proof}\\
In order to prove that $T_n^{\{l\}}$ has no $\Gamma_v$-invariants for any finite place $v$ of $F$,  it suffices to prove the same statement for $T^{\{l\}}_n\otimes_{\Z_l}\Q_l$ since we already know that $T^{\{l\}}_n$ is torsion-free for all $n.$ This will be accomplished by first proving that $V_l(J)^{\otimes n}$ has no $\Gamma_v$-invariants for any finite place $v$ of $F$ (where $J:=\Pic^0_{C/F}$ denotes the Jacobian of $C$), and then showing that $T_n^{\{l\}}\otimes_{\Z_l}\Q_l$ embeds into $V_l(J)^{\otimes n}$ for all $n$.\\
\\
Before proceeding, recall the following notions and objects: Suppose $v$ is a finite place of $F$ and $p$ is the prime number above which $v$ lies. Given a finite-dimensional representation $V$ of $\Gamma_v$ over $\Q_p$, we have the finite-dimensional $F_{v,0}$-vector spaces $\mathrm{D}_{\mathrm{st}}(V)$ and $\mathrm{D}_{\mathrm{crys}}(V)$, where $F_{v,0}$ is the maximal absolutely unramified subfield of $F_v$ (i.e., $F_{v,0}$ is the fraction field of the ring of Witt vectors of the residue field of $F_v$). Both these vector spaces come with a natural Frobenius operator, and $\mathrm{D}_{\mathrm{st}}(V)$ comes with a natural monodromy operator $N$. In the following, we shall only consider the case where $V$ is \it semistable, \rm i.e., where $\dim_{F_{v,0}}\mathrm{D}_{\mathrm{st}}(V)=\dim_{\Q_p}V.$ See \cite{FontaineII}, particularly (5.1), pp. 155ff, for more details. If $V$ is a $\Q_l$-vector space for some $l\not=p,$ we shall consider the filtration $0\subseteq P_v\subseteq I_v\subseteq \Gamma_v,$ where $P_v$ and $I_v$ are the wild inertia subgroup and the inertia subgroup, respectively, at $v.$ We shall only consider the case where the action of all elements of $\Gamma_v$ in $V$ is unipotent, (i.e., of characteristic polynomial $(x-1)^{\dim V}$). Then $P_v$ acts trivially, and we can consider the actions of a topological generator $\tau$ of $I_v/P_v$ on $V.$ We let $N:=\log \tau,$ and we can also consider the action of the geometric Frobenius $\sigma$ on $V,$ which is analogous to the action of Frobenius on $\mathrm{D}_{\mathrm{st}}(V).$ If $V$ is the $l$-adic Tate module of an Abelian variety with semistable reduction, then $(\tau-1)^2=0,$ so $N=\tau-1$ (see \cite{SGA7}, IX, Corollaire 3.5.2(v)).
\begin{proposition}
Let $l$ be prime number. As usual, let $V_l(J):=(\varprojlim J[l^n](\overline{F}))\otimes_{\Z_l}\Q_l.$ Then $V_l(J)^{\otimes n}$ has no $\Gamma_v$-invariants for any finite place $v$ of $F$ and any $n\geq 1.$ \label{Tatemodproposition}
\end{proposition}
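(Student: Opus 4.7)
The plan is to reduce to the case where $J$ has semistable reduction at $v$ and then exploit the monodromy-weight conjecture for abelian varieties. After replacing $F$ by a finite extension $F'$ such that $J\times_F F'$ has semistable reduction at every place $v'$ above $v$ (possible by the semistable reduction theorem of Grothendieck), it suffices to prove the vanishing for the smaller group $\Gamma_{v'}\subseteq\Gamma_v$; since the invariants can only grow under restriction, this is a harmless reduction.

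For $l\neq p$, write $V:=V_l(J)$. By the observation recalled just before the proposition, the monodromy operator $N=\tau-1$ satisfies $N^{2}=0$. Jacobson--Morozov produces an $\mathfrak{sl}_{2}$-triple containing $N$, and as an $\mathfrak{sl}_{2}$-module one has $V\cong W_{1}^{\oplus t}\oplus W_{0}^{\oplus 2b}$, where $W_{m}$ is the $(m+1)$-dimensional irreducible and $t,b$ are the toric and abelian ranks of the special fibre of the Néron model of $J$ at $v$. Grothendieck's monodromy-weight conjecture, which is known for abelian varieties, identifies the Frobenius weight of the $\mathfrak{sl}_{2}$-weight-$w$ subspace of $V$ with $-1+w$ (the central Frobenius weight of $V_l(J)$ being $-1$). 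Decomposing $V^{\otimes n}$ by Clebsch--Gordan, only irreducibles $W_{m}$ with $0\leq m\leq n$ appear, and since $\tau^{\otimes n}-1$ and the total monodromy $N_{\otimes n}$ differ by the invertible factor $f(N_{\otimes n})=1+N_{\otimes n}/2+\dots$, the identity
$$(V^{\otimes n})^{I_v}=\ker N_{\otimes n}$$
holds. Now $\ker N_{\otimes n}$ is the direct sum of the lowest-weight lines of all $W_{m}$-summands, carrying $\mathfrak{sl}_{2}$-weights $-m\leq 0$. Consequently Frobenius acts on $(V^{\otimes n})^{I_v}$ with weights $-n+w\leq -n<0$, so none of its eigenvalues can be $1$, and $(V^{\otimes n})^{\Gamma_v}=0$.

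For $l=p$, the same strategy runs inside the $p$-adic Hodge-theoretic formalism introduced in the excerpt: $V_{p}(J)$ is semistable by the $p$-adic comparison theorem for $J$, and since $\mathrm{D}_{\mathrm{st}}$ is a tensor functor, $\mathrm{D}_{\mathrm{st}}(V_{p}(J)^{\otimes n})=\mathrm{D}_{\mathrm{st}}(V_{p}(J))^{\otimes n}$. One then has
$$(V_{p}(J)^{\otimes n})^{\Gamma_v}=\{d\in\mathrm{D}_{\mathrm{st}}(V_{p}(J))^{\otimes n}:\varphi(d)=d,\;N(d)=0,\;d\in\mathrm{Fil}^{0}\}.$$
Combining Mokrane's $p$-adic monodromy-weight theorem with the Jacobson--Morozov/Clebsch--Gordan analysis above confines the Frobenius eigenvalues on $\ker N_{\otimes n}\subseteq\mathrm{D}_{\mathrm{st}}(V_{p}(J))^{\otimes n}$ to weights $\leq -n<0$, in particular excluding the eigenvalue $1$.

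The main obstacle will be the $l=p$ case: one must invoke both the $p$-adic comparison theorem (to obtain semistability of $V_{p}(J)$) and the considerably less elementary $p$-adic monodromy-weight statement for semistable abelian varieties. An attractive alternative would be to pass through the Weil--Deligne representation attached to $V_{l}(J)$ via Fontaine's recipe, which is independent of $l$, so that the $\mathfrak{sl}_{2}$-analysis can be performed once and for all and the two cases handled uniformly.
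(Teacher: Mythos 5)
Your proof is correct and rests on the same core idea as the paper's: pass to a finite extension so that $J$ has semistable reduction at $v$, and then exploit the weight--monodromy structure of $V_l(J)$ to show that the kernel of monodromy in $V_l(J)^{\otimes n}$ has strictly negative Frobenius weights, which rules out the eigenvalue $1$. The difference is in the bookkeeping. The paper builds the weight filtration $0\subseteq V_l(J)^t\subseteq V_l(J)^{I_v}\subseteq V_l(J)$ explicitly out of the Raynaud extension $0\to T\to G\to E\to 0$ of the N\'eron model, proves the crucial isomorphism $\Gr_0\cong\Gr_{-2}(-1)$ directly via Grothendieck's orthogonality theorem, and then chases the induced filtration on $V_l(J)^{\otimes n}$ step by step (first $N$-invariance forces the image in $\Gr_0$ to die, then weight reasons force it to die in every lower graded piece). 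You instead invoke Jacobson--Morozov to package the same information as an $\mathfrak{sl}_2$-module decomposition $V\cong W_1^{\oplus t}\oplus W_0^{\oplus 2b}$, and Clebsch--Gordan handles the tensor powers at once; the statement that $\ker N_{\otimes n}$ consists of lowest-weight vectors with Frobenius weight $\le -n<0$ is then automatic. So what you gain is a cleaner uniform treatment of the tensor powers (and, as you note, the potential to argue once for all $l$ via Fontaine's $\ell$-independent Weil--Deligne representation); what the paper's version gains is that it does not presuppose weight--monodromy as a black box, but derives what it needs from the concrete N\'eron-model geometry. Both treat $l=p$ by the analogous statement in $\mathrm{D}_{\mathrm{st}}$, via Coleman--Iovita/Katz--Messing in the paper and Mokrane in yours.

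One small inaccuracy worth fixing: in your description of $(V_p(J)^{\otimes n})^{\Gamma_v}$, the condition $d\in\mathrm{Fil}^0$ lives on $\mathrm{D}_{\mathrm{dR}}=\mathrm{D}_{\mathrm{st}}\otimes_{F_{v,0}}F_v$, not on $\mathrm{D}_{\mathrm{st}}$ itself. This does not affect your argument, since you only use the conditions $\varphi(d)=d$ and $N(d)=0$ and the Frobenius-weight bound, but it should be stated correctly.
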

\begin{proof}
This follows from the ($l$-adic and $p$-adic) weight-monodromy conjecture for Abelian varieties (we shall recall all precise statements we need below). Let $v$ be a finite place of $F$. Since replacing $F_v$ by one of its finite extensions leads to replacing $\Gamma_v$ by an open subgroup (thereby making the assertion of the Proposition stronger), we may assume without loss of generality that $J$ has semiabelian reduction at $v$. Let $p$ be the prime number over which $v$ lies. We shall first treat the case where $l\not=p,$ and then indicate briefly how the same kind of argument can be used in the case $l=p.$ Consider the filtration
$$0\subseteq V_l(J)^t\subseteq V_l(J)^{I_v}\subseteq V_l(J).$$ Here, $V_l(J)^{I_v}$ stands for the inertia-invariant subspace, and $V_l(J)^t$ for the space generated by those inertia-invariant elements of the Tate module which land in the toric part of the special fibre of the Néron model. To make the notation easier, we shall write $V_{-3}:=0,$ $V_{-2}:=V_l(J)^t,$ $V_{-1}:=V_l(J)^I,$ and $V_0:=V_l(J).$ Now consider the Raynaud extension
$$0\to T\to G \to E\to 0$$ associated with the identity component of the Néron model of $J$ over the integers of $F_v$ (where $T$ and $E$ are a torus and an Abelian scheme, respectively, over the ring of integers, and $G$ is a smooth commutative group scheme; see \cite{FC}, pp. 33f, for more details). We may assume without loss of generality that $T$ is a split torus. Using Grothendieck's orthogonality theorem, which tells us that $V_l(J)^t$ is the orthogonal complement of $V_l(J^\vee)^{I_v}$ with respect to the Weil pairing (\cite{SGA7}, IX, Théorème 5.2), we see that the monodromy operator induces an isomorphism $\Gr_0 V_0\cong \Gr_{-2} V_0(-1)$ of representations of $\Gamma_v/I_v.$ Since $\Gr_{-2}V_0=V_{-2}\cong V_l(T),$ we see that $\Gr_{-2}V_0$ and $\Gr_{0}V_0$ are pure of weight $-2$ and $0$, respectively. Since $V_l(J)^{I_v}=V_l(G),$ we see that $\Gr_{-1}V_0\cong V_l(E),$ which is pure of weight $-1$ by Deligne's theorem on the Weil conjectures. The filtration on $V_0$ induces a filtration on $V_0^{\otimes n}$ for all $n\geq 0,$ whose graded pieces are given by
$$\Gr_j V_0^{\otimes n}=\bigoplus_{\alpha_1+\alpha_2+...+\alpha_n=j} \bigotimes_{\nu=1}^n\Gr_{\alpha_\nu} V_0.$$ Since the weights of $V_0$ are nonpositive, we can deduce from this that the monodromy operator is injective on $\Gr_0V_0^{\otimes n}.$ Now suppose $x\in V_0^{\otimes n}$ is $\Gamma_v$-invariant. Then it must lie in the kernel of the monodromy operator, so its image in $\Gr_0 V_0^{\otimes n}$ is trivial. Hence $x\in (V_0^{\otimes n})_{-1}.$ But all other graded pieces have strictly negative weight, so we see by induction that $x$ lies in all subspaces of the induced filtration of $V_0^{\otimes n}.$ Since this filtration is separated, it follows that $x=0.$ \\
In the case $l=p,$ we obtain a similar filtration $0=V_{-3}\subseteq V_{-2} \subseteq V_{-1} \subseteq V_0=\mathrm{D}_{\mathrm{st}}(V_p(J));$ see \cite{CI}, section 4 (as in the previous case, we extend the ground field to ensure that $J$ has semiabelian reduction). One deduces from the results of \it loc. cit. \rm (together with \cite{KM}, Corollary 1(2)) that the results about the weight filtration and the monodromy operator which we used above carry over to the case $l=p$ \it mutatis mutandis. \rm Now we observe that a non-zero $\Gamma_v$-invariant element of $V_p(J)^{\otimes n}$ gives rise to a non-zero element of $\mathrm{D}_{\mathrm{st}}(V_p(J))^{\otimes n}$ which lies in the kernel of the monodromy operator. We conclude using the same argument as above. 
\end{proof}\\
We are now ready to prove
\begin{proposition}
Let $X$ be a smooth, proper, and geometrically integral curve over $F$, and let $b\in X(F).$ Then, for all finite sets $M$ of odd primes, $T_n^M$ is torsion-free, and has no non-trivial $\Gamma_v$-invariant elements for any finite place $v$ of $F$ and any $n\geq  1$ (in other words, the conditions [Coh1] and [Coh2] are satisfied for $X$).
\end{proposition}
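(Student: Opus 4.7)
The plan is to follow the strategy outlined immediately before the proposition. The first move is to reduce both conditions to the case of a single odd prime $\ell$. By the observation at the beginning of the section, we have
$$T_n^M = \prod_{\ell\in M} T_n^{\{\ell\}}$$
as $\Gamma_F$-modules, and both the property of being torsion-free as an abelian group and the property of having no $\Gamma_v$-fixed points are stable under finite direct products. Hence it suffices to verify [Coh1] and [Coh2] for each $T_n^{\{\ell\}}$ separately. Condition [Coh1] is then exactly the content of Proposition \ref{torsionfreeproposition}, so no new work is required there.

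For [Coh2], torsion-freeness of $T_n^{\{\ell\}}$ as a $\Z_\ell$-module (already established) implies that the natural map $T_n^{\{\ell\}} \hookrightarrow T_n^{\{\ell\}}\otimes_{\Z_\ell}\Q_\ell$ is injective and $\Gamma_v$-equivariant. Consequently, $H^0(\Gamma_v, T_n^{\{\ell\}}) = 0$ will follow once we show that $T_n^{\{\ell\}}\otimes_{\Z_\ell}\Q_\ell$ admits no non-zero $\Gamma_v$-invariants. Since Proposition \ref{Tatemodproposition} already provides this property for the Galois representation $V_\ell(J)^{\otimes n}$, the remaining task is to produce a $\Gamma_v$-equivariant injection
$$T_n^{\{\ell\}}\otimes_{\Z_\ell}\Q_\ell \hookrightarrow V_\ell(J)^{\otimes n}.$$

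To construct this injection I would work from the comparison with the topological surface group. Write $G = \pi_1^{\mathrm{top}}(X(\mathbf{C}), b)$ and let $S_n = G^{(n)}/G^{(n+1)}$. By Labute's theorem the $S_n$ are finitely generated free abelian groups, and on taking pro-$\ell$ completions we recover $T_n^{\{\ell\}} \cong S_n\otimes_{\Z}\Z_\ell$, so it is enough to embed $S_n\otimes\Q$ into $(S_1\otimes\Q)^{\otimes n}$ in a natural (in particular $\Gamma_v$-equivariant after $\ell$-adic completion) way. For this one can appeal to the Magnus-type expansion for the Malcev Lie algebra of $G$: the surface group relation $\sum_i[a_i,b_i]$ is quadratic, and the associated graded Lie algebra embeds, degree by degree, into the tensor algebra on $G^{\mathrm{ab}}\otimes\Q$ via the iterated bracket $[x_1,[x_2,\dots,[x_{n-1},x_n]\dots]]\mapsto \sum_\pi \pm x_{\pi(1)}\otimes\cdots\otimes x_{\pi(n)}$. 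Combined with the comparison isomorphism $G^{\mathrm{ab}}\otimes\Z_\ell\cong T_\ell(J)$, this produces the desired inclusion, and naturality of the whole construction forces it to intertwine the $\Gamma_v$-actions inherited from the étale fundamental group and from the Jacobian.

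The main obstacle I expect is precisely this last step: justifying cleanly that the Magnus-type embedding of the Malcev Lie algebra of a surface group is actually an embedding (not merely a map) and that its $\ell$-adic realisation is $\Gamma_v$-equivariant. A robust alternative that sidesteps the explicit construction is a weight-filtration argument in the spirit of the proof of Proposition \ref{Tatemodproposition}: the $\ell$-adic unipotent completion of $\pi_1^{\mathrm{\acute{e}t}}(\overline{X},b)$ carries a weight filtration whose $n$-th graded piece is pure of weight $-n$, and an invariant element would have to live in weight zero, which is impossible once one combines the Frobenius-eigenvalue count with the injectivity of the monodromy operator on the top weight piece (inherited from $V_\ell(J)^{\otimes n}$). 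Either route, followed by Proposition \ref{Tatemodproposition}, completes the verification of [Coh2] and thus of the proposition.
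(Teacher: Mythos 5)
Your overall strategy matches the paper's: reduce to a single odd prime $\ell$, observe that [Coh1] is already Proposition~\ref{torsionfreeproposition}, and then show that $T_n^{\{\ell\}}\otimes_{\Z_\ell}\Q_\ell$ injects $\Gamma_F$-equivariantly into $V_\ell(J)^{\otimes n}$ so that Proposition~\ref{Tatemodproposition} applies. However, the way you propose to produce that injection contains a genuine gap. The associated graded Lie algebra of the surface group $G$ is \emph{not} free: by Labute's theorem it is $L(H_1)/(r)$, the free Lie algebra on $H_1 = G^{\mathrm{ab}}$ modulo the Lie ideal generated by the quadratic relator $r=\sum_i[a_i,b_i]$. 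The Magnus-type map $L(H_1)\hookrightarrow T(H_1)$ you invoke is indeed injective on the \emph{free} Lie algebra, but it does not descend to a map $L(H_1)/(r)\to T(H_1)$: the element $r$ is nonzero in $T^2(H_1)$, so the map is not even well defined on the quotient, let alone injective. (What PBW gives you is an embedding of $L(H_1)/(r)$ into its own enveloping algebra $T(H_1)/(r)$, but that is the wrong target.) So your "main obstacle" is worse than you estimate: there is no map to repair, and your first route does not produce the required inclusion.

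The paper's proof supplies exactly the idea you are missing: instead of building the injection directly, one builds a $\Gamma_F$-equivariant \emph{surjection} $(T_1^{\{\ell\}})^{\otimes n}\twoheadrightarrow T_n^{\{\ell\}}$ (this is easy — it is just the iterated-commutator map, whose image is dense and hence all of the compact target), tensors with $\Q_\ell$ to get $V_\ell(J)^{\otimes n}\twoheadrightarrow T_n^{\{\ell\}}\otimes\Q_\ell$, and then uses Faltings' theorem (semisimplicity of $V_\ell(J)$ as a $\Gamma_F$-module) together with Chevalley's theorem (semisimplicity of tensor powers) to conclude that this surjection of $\Gamma_F$-modules splits. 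The splitting is the sought injection, equivariance comes for free, and Proposition~\ref{Tatemodproposition} finishes the argument. Your alternative weight-filtration route on the unipotent completion is plausible in spirit and closer to the proof of Proposition~\ref{Tatemodproposition}, but it is a heavier machine than needed and is not the paper's argument; it would require setting up the mixedness and monodromy-weight statements for the whole unipotent fundamental group rather than just for the Tate module.
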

\begin{proof}
The first statement has already been proved in Proposition \ref{torsionfreeproposition}.  We shall now prove by induction that there is a $\Gamma_F$-equivariant surjection $(T_1^{\{l\}})^{\otimes n}\to T_n^{\{l\}}$ (where the tensor product is taken over $\Z_l$). Here, $l$ denotes any prime number. If $n=1$, the claim is trivial. Suppose we have constructed such a surjection for the natural number $n$. Consider the map
$$T_n^{\{l\}}\otimes_{\Z} T_1^{\{l\}} \to T_{n+1}^{\{l\}},$$ which sends $\alpha\otimes \beta$ to the class of the commutator $[\alpha, \beta].$ Using standard commutator identities, we see that this map is a well-defined homomorphism. Since $T_n^{\{l\}},$ $T_1^{\{l\}},$ and $T_{n+1}^{\{l\}}$ are finitely generated $\Z_l$-modules, this map factors uniquely through a $\Z_l$-homomorphism $T_n^{\{l\}}\otimes_{\Z_l} T_1^{\{l\}} \to T_{n+1}^{\{l\}}.$ We already know that any subgroup of finite index in the source is open (since the source is finitely generated over $\Z_l$), which implies that this map is automatically continuous. The image of this homomorphism is clearly dense, and since the source is compact, it follows that the homomorphism is surjective. By the induction hypothesis, we already have a surjection $(T_1^{\{l\}})^{\otimes n}\to T_n^{\{l\}},$ so the claim follows. In particular, we obtain a surjection
$$V_l(J)^{\otimes n}\to T_n^{\{l\}} \otimes_{\Z_l} \Q_l.$$ By Faltings' solution of the Tate conjecture (\cite{Fal}, Satz 3), we know that the $\Gamma_F$-representation on $V_l(J)$ is semisimple, and a theorem of Chevalley now tells us that the same is true for $V_l(J)^{\otimes n}.$ Hence the surjection above must split, and the splitting is \it a forteriori \rm $\Gamma_v$-equivariant for all finite places $v$ of $F$. The remaining part of the Proposition follows from Proposition \ref{Tatemodproposition}.
\end{proof}

\end{document}